\newtheorem{theorem}{Theorem}[section]
\newtheorem{proposition}[theorem]{Proposition}
\newtheorem{lemma}[theorem]{Lemma}
\newtheorem{corollary}[theorem]{Corollary}
\theoremstyle{definition}
\newtheorem{remark}[theorem]{Remark}
\newtheorem{conjecture}[theorem]{Conjecture}
\newcommand{\ZZ}{ \ensuremath{\mathbb{Z}}}
\newcommand{\Gin}{\ensuremath{\mathrm{Gin}}}
\newcommand{\init}{\ensuremath{\mathrm{in}}}
\newcommand{\GL}{{GL}}
\newcommand{\SL}{{SL}}
\newcommand{\Tor}{\ensuremath{\mathrm{Tor}}}
\newcommand{\lex}{{\mathrm{lex}}}
\newcommand{\rlex}{{\mathrm{{oplex}}}}
\newcommand{\rev}{{\mathrm{{rev}}}}
\def\cocoa{{\hbox{\rm C\kern-.13em o\kern-.07em C\kern-.13em o\kern-.15em A}}}
\newcommand{\ww}{\mathbf w}
\newcommand{\oplex}{{\mathrm{{oplex}}}}
\begin{document}
\title[Hilbert functions of general intersections]
{On Hilbert functions of\\ general intersections of ideals}

\author{Giulio Caviglia}
\address{
Department of Mathematics,
Purdue University,
West Lafayette,
IN 47901, USA.
}
\email{gcavigli@math.purdue.edu}

\author{Satoshi Murai}
\address{
Satoshi Murai,
Department of Mathematical Science,
Faculty of Science,
Yamaguchi University,
1677-1 Yoshida, Yamaguchi 753-8512, Japan.
}
\email{murai@yamaguchi-u.ac.jp}

\thanks{The work of the first author was supported by a grant from the
Simons Foundation (209661 to G. C.).
The work of the second author was supported by KAKENHI 22740018.
}
\subjclass[2010]{Primary 13P10, 13C12,  Secondary 13A02}

\maketitle

\begin{abstract}
Let $I$ and $J$ be homogeneous ideals in a standard graded polynomial ring.
We study upper bounds of the Hilbert function of the intersection of $I$ and $g(J)$,
where $g$ is a general change of coordinates.
Our main result gives a generalization of Green's hyperplane section theorem.
\end{abstract}

\section{Introduction}
Hilbert functions of graded $K$-algebras
are important invariants studied in several areas of mathematics.
In the theory of Hilbert functions,
one of the most useful tools is Green's hyperplane section theorem,
which gives a sharp upper bound for the Hilbert function of $R/hR$, where $R$ is a standard graded $K$-algebra  and $h$ is a general linear form, in terms of the Hilbert function of $R$.
This result of Green has been extended to the case of general homogeneous polynomials
by Herzog and Popescu \cite{HP} and Gasharov \cite{Ga}.
In this paper, we study a further generalization of these theorems.

Let $K$ be an infinite field
and $S=K[x_1,\dots,x_n]$ a standard graded polynomial ring.
Recall that the \textit{Hilbert function} $H(M,-) : \mathbb{Z} \to \mathbb{Z}$ of a finitely generated graded $S$-module $M$
is the numerical function defined by
$$H(M,d)=\dim_K M_d,$$
where $M_d$ is the graded component of $M$ of degree $d$.
A set $W$ of monomials of $S$ is said to be \textit{lex}
if, for all monomials $u,v \in S$ of the same degree,
$u \in W$ and $v>_\lex u$ imply $v \in W$,
where $>_\lex$ is the lexicographic order induced by the ordering $x_1> \cdots > x_n$.
We say that a monomial ideal $I \subset S$ is a \textit{lex ideal}
if the set of monomials in $I$ is lex.
The classical Macaulay's theorem \cite{Ma} guarantees that,
for any homogeneous ideal $I \subset S$,
there exists a unique lex ideal, denoted by $I^{\lex}$, with the same Hilbert function as $I$.
Green's hyperplane section theorem \cite{Gr} states

\begin{theorem}[Green's hyperplane section theorem]
\label{green}
Let $I \subset S$ be a homogeneous ideal.
For a general linear form $h \in S_1$,
$$H(I \cap (h),d) \leq  H(I^\lex \cap (x_n),d) \ \ \mbox{for all } d \geq 0.$$
\end{theorem}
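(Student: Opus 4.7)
The plan is to reduce the statement to a purely combinatorial comparison of monomial ideals via generic initial ideals and then invoke Macaulay's theorem. Since $h$ is generic, applying a generic $g \in \GL_n(K)$ sending $h$ to $x_n$ and replacing $I$ by $g(I)$ (which has the same Hilbert function as $I$) reduces the claim to
$$H(g(I) \cap (x_n), d) \leq H(I^\lex \cap (x_n), d)$$
for a generic $g$.

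Next, let $J = \gin(I)$ denote the generic initial ideal of $I$ with respect to the reverse lexicographic order induced by $x_1 > \cdots > x_n$. A classical feature of this term order, originating with Galligo and used heavily in the work of Bayer--Stillman, is that in generic coordinates the formation of initial ideals commutes with colon and intersection by the last variable; in particular,
$$\init_{\rev}\bigl(g(I) \cap (x_n)\bigr) = J \cap (x_n),$$
so that $H(g(I) \cap (x_n), d) = H(J \cap (x_n), d)$. Since $J$ is Borel-fixed and has the same Hilbert function as $L := I^\lex$, the problem is reduced to the monomial-ideal statement
$$H(J \cap (x_n), d) \leq H(L \cap (x_n), d) \quad \text{for all } d \geq 0.$$

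For any monomial ideal $M$, writing $\bar M := (M + (x_n))/(x_n) \subseteq \bar S := K[x_1, \ldots, x_{n-1}]$, one has $H(M \cap (x_n), d) = H(M, d) - H(\bar M, d)$. Since $H(J, d) = H(L, d)$, the inequality is equivalent to $H(\bar J, d) \geq H(\bar L, d)$, and one checks that $\bar L$ is itself a lex ideal of $\bar S$ (because a lex segment in $S_d$ intersected with $\bar S_d$ remains a lex segment). The main obstacle is then to show that, among all monomial ideals $M \subset S$ with the prescribed Hilbert function, the lex ideal \emph{minimizes} $H(\bar M, d)$. This is the combinatorial heart of the argument: it follows from Macaulay's theorem together with the binomial (Macaulay) representation of $H(M,d)$, applied recursively on the number of variables. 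The genericity and rev-lex arguments package the algebraic content cleanly, but verifying this extremality of the lex ideal requires a careful monomial-counting argument, exploiting the Borel-fixed property $x_n v \in J \iff (x_1, \ldots, x_n) v \subseteq J$ to control the degree-$d$ monomials of $J$ divisible by $x_n$ in terms of $(J : \mathfrak{m})_{d-1}$.
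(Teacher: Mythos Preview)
Your reduction via the reverse-lex generic initial ideal is sound: the identity $\init_{\rev}(g(I)\cap(x_n))=\init_{\rev}(g(I))\cap(x_n)$ holds for revlex, so the problem does reduce to comparing a $0$-Borel ideal $J=\gin_{\rev}(I)$ with the lex ideal $L=I^{\lex}$. The proposal breaks down at the final combinatorial step. First, the assertion that ``among all monomial ideals $M\subset S$ with the prescribed Hilbert function, the lex ideal minimizes $H(\bar M,d)$'' is false as stated: already for $n=2$ and $M=(x_2)$ one has $\bar M=0$ while $\bar{L}=\overline{(x_1)}=(x_1)\subset K[x_1]$, so lex is not the minimizer. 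The inequality $H(\bar J,d)\geq H(\bar L,d)$ only holds once $J$ is $0$-Borel, and that restricted statement is exactly the combinatorial content of Green's theorem. Second, saying it ``follows from Macaulay's theorem together with the binomial representation'' is not a proof: that route is essentially Green's original argument and requires a genuine induction on the Macaulay coefficients that you have not supplied. As written, you have reduced Green's theorem to a restatement of Green's theorem for Borel-fixed monomial ideals.

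The paper does not give an independent proof of this statement; it is quoted as a known result, and in characteristic~$0$ it is recovered as the special case $J=(x_n)$ of Theorem~\ref{intersection}. In that route the reduction step passes to $\Gin_{\lex}(I)$ (via Corollary~\ref{2.6}) rather than $\gin_{\rev}(I)$, and the combinatorial core is handled by Lemma~\ref{3-1}: listing a $0$-Borel set and the lex set of the same cardinality in decreasing revlex order gives a termwise comparison $v_i\geq_{\rev}w_i$. This immediately yields Lemma~\ref{3-2}, namely $\dim_K(I\cap P)_d\leq\dim_K(I^{\lex}\cap P)_d$ for any $0$-Borel $I$ and opposite-lex $P$; taking $P=(x_n)$ gives precisely the missing step in your argument. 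If you wish to complete your outline along your own lines, that revlex comparison lemma (or an equivalent direct counting argument for $0$-Borel sets) is what must be supplied in place of the appeal to Macaulay's theorem.
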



Green's hyperplane section theorem is known to be useful to prove several important results on Hilbert functions such as Macaulay's theorem \cite{Ma} and Gotzmann's persistence theorem \cite{Go}, see \cite{Gr}.
Herzog and Popescu \cite{HP} (in characteristic $0$) and Gasharov \cite{Ga} (in positive characteristic) generalized Green's hyperplane section theorem in the following form.

\begin{theorem}[Herzog--Popescu, Gasharov]
\label{hpg}
Let $I \subset S$ be a homogeneous ideal.
For a general homogeneous polynomial $h \in S$ of degree $a$,
$$H(I \cap (h),d) \leq  H(I^\lex \cap(x_n^a),d) \ \ \mbox{for all } d \geq 0.$$
\end{theorem}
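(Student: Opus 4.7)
The plan is to reformulate the inequality in terms of colon ideals and then pass to the Borel-fixed monomial setting. Since $h$ is a nonzerodivisor on $S$, multiplication by $h$ is injective, so $I \cap (h) = h\cdot (I:h)$ and hence $H(I \cap (h),d) = H(I:h, d-a)$; the identical identity with $h$ replaced by $x_n^a$ gives $H(I^\lex \cap (x_n^a),d) = H(I^\lex : x_n^a, d-a)$. Thus the theorem is equivalent to
$$H(I:h, d) \leq H(I^\lex : x_n^a, d) \quad \text{for all } d \geq 0.$$

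Next I would pass to generic initial ideals. Let $J = \gin(I)$ with respect to the reverse lexicographic order: $J$ is strongly stable, has the Hilbert function of $I$, and satisfies $J^\lex = I^\lex$. Upper semicontinuity of the Hilbert function of $I_t \cap (h_t)$ in the flat family connecting $I$ to $J$ (together with the fact that $\gin(I)$ is a flat limit of coordinate changes of $I$) gives $H(I \cap (h),d) \leq H(J \cap (h'),d)$ for general $h,h'$ of degree $a$. So it suffices to prove the inequality when $I$ is strongly stable. For such $I$, the distinguished role of $x_n$ in the reverse lexicographic order, combined with strong stability, forces $H(I:h, d) = H(I : x_n^a, d)$ for a general form $h$ of degree $a$; intuitively $x_n$ is already "as generic as possible" against $I$ (any associated prime of $S/I$ is of the form $(x_1,\dots,x_k)$, so $x_n$ plays the role of a generic element from the point of view of colons).

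The problem is now reduced to the purely combinatorial inequality $H(I : x_n^a, d) \leq H(I^\lex : x_n^a, d)$ for strongly stable $I$. Both ideals are monomial, and one checks directly that $I^\lex : x_n^a$ is itself a lex ideal: if $u\, x_n^a \in I^\lex$ and $v >_\lex u$ with $\deg v = \deg u$, then $v\, x_n^a >_\lex u\, x_n^a$, so $v\, x_n^a \in I^\lex$. I would attack the remaining inequality by induction on $a$, the base case $a=1$ being Green's hyperplane section theorem applied to $S/I$ with the generic linear form $x_n$. For the inductive step, I would factor $x_n^a = x_n \cdot x_n^{a-1}$ and analyze the two-step colon, comparing the intermediate Borel-fixed ideal $I : x_n$ with its lex-analogue. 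Alternatively, Gasharov's compression approach replaces the monomial set of $I$ in each degree by a lex-compressed set while monotonically increasing $H(I : x_n^a, d)$, and one checks that the fully compressed ideal is $I^\lex$.

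The main obstacle I expect is precisely this last combinatorial comparison. While $I^\lex : x_n^a$ is easy to describe, controlling $H(I : x_n^a, d)$ for a merely strongly stable $I$ requires a subtle monomial-by-monomial count. The inductive route must track how colon-by-$x_n$ interacts with lex-segments and verify that the lex operation and colon operation commute up to a favorable inequality; the compression route must confirm that every intermediate compression of $I$ remains strongly stable (or at least retains the symmetries needed) and that the cardinality $|\{u : u\,x_n^a \in I\}|$ in each degree does not decrease under any compression step. Either way, the heart of the proof lies in this step, with the earlier colon-reformulation and $\gin$-reduction serving to set up a clean monomial arena in which the combinatorics can be carried out.
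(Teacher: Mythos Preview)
Your strategy is the classical Herzog--Popescu/Gasharov route and is sound as an outline, but it is genuinely different from what the paper does. The paper never passes to colon ideals; instead it obtains this theorem (in characteristic~$0$) as the special case $J=(h)$ of Theorem~\ref{intersection}. Concretely, Corollary~\ref{2.6}(i) replaces $I$ by $\Gin_\lex(I)$ and $(h)$ by $\Gin_\oplex((h))=(x_n^a)$ in one stroke via a $\Tor$-semicontinuity/degeneration argument, and the remaining combinatorial step is Lemma~\ref{3-2}: for a $0$-Borel ideal $I$ one compares $(I\cap(x_n^a))_d$ and $(I^\lex\cap(x_n^a))_d$ directly using the ordering Lemma~\ref{3-1}. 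That lemma says that if the degree-$d$ monomials of a $0$-Borel set and of the lex set of the same cardinality are listed in decreasing $>_\rev$ order, the $i$th lex monomial is always $\leq_\rev$ the $i$th Borel monomial; since the degree-$d$ monomials of $(x_n^a)$ form a revlex-final segment, at least as many lex monomials land in it. This one-line count completely replaces the induction-on-$a$/compression argument you flag as the main obstacle, and it is what the generalization to arbitrary $J$ with lex $\Gin_\lex(J)$ buys.

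Two small comments on your outline. In step~3 you assert the equality $H(I:h,d)=H(I:x_n^a,d)$ for strongly stable $I$; the associated-prime heuristic you give only justifies this when $x_n$ is regular on $S/I$. Fortunately only the inequality $\leq$ is needed for the proof, and that follows immediately from semicontinuity by specializing $h$ to $x_n^a$. In step~2 the upper-semicontinuity you invoke is exactly the content of the paper's Lemmas~\ref{lemma1}--\ref{lemma2}, so that part of your reduction and the paper's coincide.
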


We study a generalization of Theorems \ref{green} and \ref{hpg}.
Let $>_\rlex$ be the lexicographic order on $S$ induced by the ordering $x_n> \cdots > x_1$.
A set $W$ of monomials of $S$ is said to be \textit{opposite lex}
if, for all monomials $u,v \in S$ of the same degree,
$u \in W$ and $v>_\rlex u$ imply $v \in W$.
Also, we say that a monomial ideal $I \subset S$ is an \textit{opposite lex ideal}
if the set of monomials in $I$ is opposite lex.
For a homogeneous ideal $I \subset S$,
let $I^\oplex$ be the opposite lex ideal with the same Hilbert function as $I$
and let $\Gin_\sigma(I)$ be the generic initial ideal (\cite[\S 15.9]{Ei}) of $I$
with respect to a term order $>_\sigma$.

In Section 3 we will prove the following

\begin{theorem}
\label{intersection} Suppose  $\mathrm{char}(K)=0$.
Let $I\subset S$ and $J \subset S$ be homogeneous ideals such that $\Gin_\lex(J)$ is lex.
For  a general change of coordinates $g$ of $S$,
$$H(I \cap g(J),d) \leq H(I^\lex \cap J^\oplex ,d)
\ \ \mbox{for all } d\geq 0.$$
\end{theorem}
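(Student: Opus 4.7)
The plan is to reformulate the statement in terms of sums rather than intersections. From the short exact sequence
$$0 \longrightarrow I \cap g(J) \longrightarrow I \oplus g(J) \longrightarrow I + g(J) \longrightarrow 0$$
we have $H(I \cap g(J),d) = H(I,d) + H(g(J),d) - H(I + g(J),d)$; since $H(g(J),d) = H(J,d) = H(J^\oplex,d)$ and $H(I,d) = H(I^\lex,d)$, the stated bound is equivalent to
$$H(I + g(J),d) \geq H(I^\lex + J^\oplex,d) \ \ \mbox{for all } d \geq 0.$$
This reformulation is natural because, for a general $g$, the subspace $g(J)_d$ is in ``generic position'' relative to $I_d$, so $H(I+g(J),d)$ tends to be maximized; one now has to show that this maximal value is at least the combinatorially defined quantity $H(I^\lex + J^\oplex,d)$.

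The main tools would be the theory of generic initial ideals together with an inductive use of Theorem~\ref{hpg}. By the very definition of $\Gin$, for a generic $g$ one has $\In_\lex(g(J)) = \Gin_\lex(J)$, which by hypothesis equals $J^\lex$; this gives firm control over the lex structure of $g(J)$. The base case of the argument is $J$ principal: if $J = (h)$ with $\deg h = a$, then $J^\oplex = (x_n^a)$ and the desired inequality reduces to Theorem~\ref{hpg} applied to the general form $g(h)$. For the general case, one would induct on a suitable measure of complexity of $J$ — for instance the number of minimal generators of $J^\oplex$, or its Hilbert function — splitting off one oplex-largest generator at a time and reducing to a simpler ideal $J'$. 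One must check that the hypothesis $\Gin_\lex(J) = J^\lex$ propagates to $J'$, so that the inductive step can be combined either with Theorem~\ref{hpg} applied to the principal piece or with the inductive hypothesis applied to $J'$.

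The central difficulty I foresee is the asymmetry between the lex structure of $I^\lex$ (with $x_1 > \cdots > x_n$) and the opposite lex structure of $J^\oplex$ (with $x_n > \cdots > x_1$). Bridging this gap should require a Gröbner-type deformation — via a one-parameter subgroup of $\GL$ or an appropriate weight vector — that degenerates $I + g(J)$ to an ideal containing $I^\lex + J^\oplex$, with the Hilbert function behaving upper-semicontinuously in the flat limit. Matching the lex behavior for $I$ with the opposite-lex behavior for $J$ inside a single deformation, and verifying flatness so that Hilbert functions can only grow in the limit, is the most subtle point; it is precisely here that the assumption $\Gin_\lex(J) = J^\lex$ should be indispensable, since it is what forces the generic shape of $J$ to be compatible with $J^\oplex$ appearing on the right-hand side of the bound.
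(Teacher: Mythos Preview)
Your reformulation via sums is correct and harmless, and your instinct that a Gr\"obner-type deformation is the right tool is exactly what the paper uses: one shows (Corollary~\ref{2.6}(i)) that for general $g$,
\[
H(I\cap g(J),d)\le H\big(\Gin_\lex(I)\cap \Gin_\rlex(J),d\big),
\]
and the hypothesis $\Gin_\lex(J)=J^\lex$ forces $\Gin_\rlex(J)=J^\oplex$. So the deformation step lands you at $\Gin_\lex(I)\cap J^\oplex$, with $\Gin_\lex(I)$ a $0$-Borel ideal. This part of your plan is essentially the paper's argument.

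Where your proposal diverges---and where there is a genuine gap---is the second half. You propose to reach $I^\lex$ on the left by an induction on the generators of $J^\oplex$, peeling off one oplex generator at a time and invoking Theorem~\ref{hpg} for the principal piece. This does not organize into a working induction: if you try to write $J^\oplex=(J')^\oplex+(u)$, there is no corresponding decomposition $g(J)=g(J')+(h)$ with $h$ a general form of the right degree, and even if there were, Theorem~\ref{hpg} applied to $I+g(J')$ compares with $\big(I+g(J')\big)^\lex\cap(x_n^a)$, which bears no usable relation to $I^\lex+(J')^\oplex$. You also cannot guarantee that the hypothesis $\Gin_\lex(J')$ is lex survives the peeling. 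In short, the inductive mechanism you sketch has no step.

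The paper bypasses this entirely. After the deformation, the remaining task is to replace the $0$-Borel ideal $\Gin_\lex(I)$ by $I^\lex$ while keeping $J^\oplex$ fixed. This is done by a single combinatorial lemma (Lemma~\ref{3-2}): if $V\subset S_d$ is a $0$-Borel set, $W\subset S_d$ the lex set with $\#V=\#W$, and $Q\subset S_d$ an oplex set, then $\#(V\cap Q)\le\#(W\cap Q)$. The proof is a two-line counting argument once one knows (Lemma~\ref{3-1}, from \cite{Mu}) that listing $V$ and $W$ in decreasing \emph{revlex} order gives $v_i\ge_{\rev} w_i$ termwise; since $Q$ is a terminal segment for $>_\rev$, the claim follows. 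This monomial comparison is the missing idea in your plan, and it replaces the attempted induction on $J$ altogether.
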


Theorems \ref{green} and \ref{hpg}, assuming that the characteristic is zero, are special cases of the above theorem  when $J$ is principal.
Note that Theorem \ref{intersection} is sharp since the equality holds if $I$ is lex and $J$ is oplex (Remark \ref{rem1}). Note also that if $\Gin_\sigma(I)$ is lex for some term order $>_\sigma$ then $\Gin_\lex(J)$ must be lex as well (\cite[Corollary 1.6]{Co1}). 

Unfortunately, the assumption on $J$, as well as the assumption on the characteristic of $K$,  in Theorem \ref{intersection}
are essential (see Remark \ref{example}).
However, we prove the following result for the product of ideals.

\begin{theorem}
\label{product}
Suppose $\mathrm{char}(K)=0$.
Let $I\subset S$ and $J \subset S$ be homogeneous ideals.
For  a general change of coordinates $g$ of $S$,
$$H(I  g(J),d) \geq H(I^\lex J^\oplex ,d)
\ \ \mbox{for all } d\geq 0.$$
\end{theorem}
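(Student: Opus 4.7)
My plan is a semicontinuity / flat-deformation argument. First, using the change-of-coordinates identity $H(Ig(J),d) = H(g_1(I)\cdot g_2(J),d)$ (valid for two independent general changes of coordinates $g_1,g_2 \in \mathrm{GL}(S_1)$, upon setting $g = g_1^{-1}g_2$), I may view the left-hand side as the Hilbert function of a product of two ideals, each varying independently in its $\mathrm{GL}$-orbit. The central observation is: for any flat family of pairs $(I_t, J_t)$ with constant Hilbert functions, $H(I_t J_t,d)$ is lower semicontinuous in $t$. Indeed, writing $\mu_t : I_t \otimes_K J_t \to S$ for the multiplication map in degree $d$, we have $H(I_t J_t, d) = \sum_a H(I_t,a) H(J_t,d-a) - \dim \ker(\mu_t)_d$; the first summand depends only on the (constant) Hilbert functions, while the kernel dimension is upper semicontinuous in $t$. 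Consequently, the generic value of $H(I_t J_t,d)$ is a maximum, so any flat deformation of the generic pair $(g_1(I), g_2(J))$ to $(I^\lex, J^\oplex)$ will yield the desired inequality.

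To construct such a deformation I chain two stages, applied in parallel on both sides. Stage (a) is Gr\"obner degeneration: a weight vector refining $>_\lex$ provides a flat family deforming $g_1(I)$ to $\Gin_\lex(I)$, and a weight vector refining $>_\rlex$ deforms $g_2(J)$ to $\Gin_\rlex(J)$. Stage (b) connects each of these Borel-fixed ideals to the corresponding lex-type ideal: $\Gin_\lex(I)$ to $I^\lex$ and $\Gin_\rlex(J)$ to $J^\oplex$. Here I invoke the Pardue-type sequence of flat deformations through Borel-fixed ideals of fixed Hilbert function, originally introduced in the proof of the Bigatti--Hulett--Pardue theorem on maximality of Betti numbers of lex ideals. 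Applying semicontinuity at each substep, alternating between the two sides, produces the chain $H(Ig(J),d) \geq H(\Gin_\lex(I)\cdot g_2(J),d) \geq H(I^\lex \cdot g_2(J),d) \geq H(I^\lex \cdot \Gin_\rlex(J),d) \geq H(I^\lex\cdot J^\oplex,d)$.

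The main technical obstacle is Stage (b): realizing Pardue's combinatorial compression operations as a finite concatenation of honest flat families with constant Hilbert function connecting an arbitrary Borel-fixed ideal to the corresponding lex segment ideal. This is precisely where the characteristic-zero assumption enters essentially, since the structural theory of Borel-fixed ideals and of generic initial ideals takes its cleanest form in that setting (in positive characteristic the Borel-fixed class is strictly larger and the deformations must be modified). Once Stage (b) is justified, the semicontinuity inequalities chain together to give the theorem. As a sanity check, note that the argument is consistent with the observation that for $J$ principal both sides collapse to $H(I,d-a)$, recovering the sharpness statements already present in Theorems \ref{green} and \ref{hpg}.
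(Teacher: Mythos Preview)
Your Stage~(a) --- passing from $I\,g(J)$ to $\Gin_\lex(I)\cdot\Gin_{\oplex}(J)$ via Gr\"obner degeneration and lower semicontinuity of the product Hilbert function --- is essentially the paper's Corollary~2.6(ii). The divergence is entirely in Stage~(b), and there the argument has a genuine gap.

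The issue is not whether Pardue's moves can be realized as flat families with constant Hilbert function, but whether the resulting semicontinuity inequalities point in the \emph{direction you need}. For $H(I_t\cdot P,d)$ to decrease along your chain you must always have the current ideal at the generic fibre and the next one at the special fibre. Pardue's construction, however, is a zigzag: from a Borel ideal $B$ one first distracts (equivalently, polarizes and sections by generic linear forms) to a non-monomial ideal $B''$ --- here $B$ is the \emph{special} fibre and $B''$ the generic one --- and only then degenerates to $B'=\Gin(B'')$, a Borel ideal closer to lex, with $B''$ generic and $B'$ special. Semicontinuity yields $H(B''P,d)\geq H(BP,d)$ and $H(B''P,d)\geq H(B'P,d)$, from which no comparison between $H(BP,d)$ and $H(B'P,d)$ follows. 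Nor can a single weight degeneration rescue this step: both $\Gin_\lex(I)$ and $I^\lex$ are monomial ideals, so any weight degeneration of either is itself. You have correctly identified Stage~(b) as the obstacle, but the obstacle is the orientation of the families, not merely their existence.

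The paper sidesteps Stage~(b) by a direct combinatorial argument available once both factors are of Borel type. For a $0$-Borel ideal $I$ and an opposite $0$-Borel ideal $P$ one has the direct-sum decomposition $IP=\bigoplus_{u\in G(P)} I_{(\leq\min u)}\cdot u$ (Lemma~3.4), and a revlex comparison between an arbitrary $0$-Borel set and the lex set of the same cardinality (Lemma~3.1) gives $\dim_K (I_{(\leq k)})_d\geq \dim_K (I^\lex_{(\leq k)})_d$ for all $k$, whence $\dim_K(IP)_d\geq\dim_K(I^\lex P)_d$ (Lemma~3.5). Applying this twice replaces $\Gin_\lex(I)$ by $I^\lex$ and then $\Gin_{\oplex}(J)$ by $J^\oplex$. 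This elementary combinatorial step, not a deformation argument, is the ingredient your sketch is missing.
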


Inspired by Theorems \ref{intersection} and \ref{product},
we suggest the following conjecture.

\begin{conjecture}
\label{conj} Suppose $\mathrm{char}(K)=0.$
Let $I\subset S$ and $J \subset S$ be homogeneous ideals such that $\Gin_\lex(J)$ is lex.
For  a general change of coordinates $g$ of $S$,
\[
\dim_K \Tor_i(S/I,S/g(J))_d \leq  \dim_K \Tor_i(S/I^\lex,S/J^\oplex)_d
\ \ \mbox{for all } d\geq 0.
\]
\end{conjecture}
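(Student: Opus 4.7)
The cases $i=0$ and $i=1$ follow immediately from Theorems \ref{intersection} and \ref{product}. For $i=0$, one has $\Tor_0(S/I,S/g(J)) = S/(I+g(J))$, and the inclusion--exclusion identity $H(I+g(J),d)=H(I,d)+H(g(J),d)-H(I\cap g(J),d)$, together with $H(I,d)=H(I^\lex,d)$ and $H(g(J),d)=H(J^\oplex,d)$, reduces the desired inequality to Theorem \ref{intersection}. For $i=1$, the isomorphism $\Tor_1(S/I,S/g(J)) \cong (I\cap g(J))/(I\cdot g(J))$ gives
\[
\dim_K \Tor_1(S/I,S/g(J))_d = H(I\cap g(J),d)-H(I\cdot g(J),d),
\]
and the desired bound follows by combining the compatibly oriented inequalities of Theorems \ref{intersection} (upper bound on intersection) and \ref{product} (lower bound on product).

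For $i\geq 2$ the plan is a two-stage Gr\"obner degeneration. Using upper semicontinuity of $\dim_K\Tor_i$ in flat families, the first step is to deform $g(J)$ to $\Gin_\oplex(J)$ while keeping $I$ fixed, and then to deform $I$ to $\Gin_\lex(I)$ while keeping the (now monomial) second ideal fixed. This yields
\[
\dim_K \Tor_i(S/I,S/g(J))_d \leq \dim_K \Tor_i(S/\Gin_\lex(I),S/\Gin_\oplex(J))_d.
\]
Under the hypothesis $\Gin_\lex(J)=J^\lex$, the next step is to verify that $\Gin_\oplex(J)$ coincides with $J^\oplex$ as well, by a symmetry argument comparing the two opposite lexicographic orders on the variables.

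It then remains to bound the Tor of the Borel-fixed pair $(\Gin_\lex(I), J^\oplex)$ above by the Tor of $(I^\lex, J^\oplex)$. This is a two-ideal analogue of the Bigatti--Hulett--Pardue theorem, which treats the classical case $J=\mathfrak{m}$. The plan is to construct a sequence of combinatorial substitutions on $\Gin_\lex(I)$, preserving its Hilbert function, that progressively transform it into $I^\lex$, verifying at each step that the graded Betti numbers against $S/J^\oplex$ weakly increase in every degree. A natural mechanism would be to analyze the double complex obtained by tensoring the Eliahou--Kervaire resolution of the Borel-fixed ideal with an explicit (Eliahou--Kervaire-type) resolution of $S/J^\oplex$.

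The principal obstacle is this final combinatorial step. In the classical Bigatti--Hulett--Pardue setting the Tor is taken against $K$, and combinatorial moves can be analyzed directly via Eliahou--Kervaire; here, however, Tor is taken against a second ideal $S/J^\oplex$ carrying its own nontrivial syzygies, and verifying that elementary lex-ifying moves on $I$ remain monotone for $\dim_K\Tor_i(-,S/J^\oplex)_d$ in all degrees will require a new combinatorial identity. This is where a genuinely new idea appears to be needed: the arguments behind Theorems \ref{intersection} and \ref{product} give precisely the $i=0,1$ cases, and bridging to arbitrary $i$ is the main conceptual gap.
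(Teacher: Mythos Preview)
Your assessment is accurate and matches the paper's own position: this statement is a \emph{conjecture}, and the paper does not prove it beyond the cases $i=0,1$. Your argument for those two cases is exactly the paper's (Remark~\ref{cor}): reduce $\Tor_0$ and $\Tor_1$ to Hilbert-function inequalities for $I+g(J)$, $I\cap g(J)$, and $Ig(J)$, then invoke Theorems~\ref{intersection} and~\ref{product}.

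For $i\ge 2$, your two-stage degeneration to $\Tor_i(S/\Gin_\lex(I),S/\Gin_\oplex(J))$ is precisely Theorem~\ref{2.5}, which the paper presents only as ``evidence supporting'' the conjecture, not as a proof. Your step~3 observation is correct and worth recording: writing $\tau$ for the involution $x_i\mapsto x_{n+1-i}$, one has $\Gin_\oplex(J)=\tau(\Gin_\lex(J))$, so the hypothesis $\Gin_\lex(J)=J^\lex$ forces $\Gin_\oplex(J)=\tau(J^\lex)=J^\oplex$. Thus Theorem~\ref{2.5} already yields
\[
\dim_K \Tor_i(S/I,S/g(J))_d \le \dim_K \Tor_i\big(S/\Gin_\lex(I),\,S/J^\oplex\big)_d,
\]
and the entire content of the conjecture for $i\ge 2$ is reduced to your step~4: replacing the $0$-Borel ideal $\Gin_\lex(I)$ by $I^\lex$ while bounding $\Tor_i(-,S/J^\oplex)$ from above. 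You are right that this is a two-ideal analogue of Bigatti--Hulett--Pardue and that nothing in the paper supplies it. The paper offers no mechanism here; your Eliahou--Kervaire double-complex idea is a reasonable line of attack, but as you say, a new monotonicity argument would be required. In short, you have correctly located the boundary of what is proved: $i\le 1$ is settled, and $i\ge 2$ remains open in the paper as well.
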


Theorems \ref{intersection} and \ref{product}
show that the conjecture is true if $i=0$ or $i=1.$ 
The conjecture is also known to be true when $J$ is generated by linear forms  by the result of Conca \cite[Theorem 4.2]{Co}. Theorem \ref{2.5}, which we prove later, also provides some evidence supporting the above inequality.


\section{Dimension of $\Tor$ and general change of coordinates}

Let $\GL_n(K)$ be the general linear group of invertible $n \times n$ matrices over $K$.
Throughout the paper, we identify each element $h=(a_{ij}) \in \GL_n(K)$
with the change of coordinates defined by $h(x_i)=\sum_{j=1}^n a_{ji}x_j$
for all $i$.

We say that a property (P) holds for a general $g \in \GL_n(K)$
if there is a non-empty Zariski open subset $U \subset \GL_n(K)$
such that (P) holds for all $g \in U$.

We first prove that, for two homogeneous ideals $I \subset S$ and $J \subset S$,
the Hilbert function of $I \cap g(J)$ and that of $I g(J)$ are  well defined for a general $g \in \GL_n (K)$, i.e.\ there exists a non-empty Zariski open subset of $\GL_n(K)$ on which the Hilbert function of $I \cap g(J)$ and that of $I g(J)$ are constant.
\begin{lemma}
\label{2-0}
Let $I \subset S$ and $J \subset S$ be homogeneous ideals.
For a general change of coordinates $g \in \GL_n(K)$,
the function $H(I \cap g(J),-)$
and $H(I g(J),-)$ are well defined.
\end{lemma}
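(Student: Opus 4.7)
The plan is to realize the families $\{I+g(J)\}_g$ and $\{Ig(J)\}_g$ as fibers of single algebraic families over $\GL_n(K)$ and then extract constancy of the Hilbert function from Grothendieck's generic flatness lemma. Let $A=K[\GL_n(K)]$, i.e.\ the localization of the polynomial ring $K[y_{ij}:1\le i,j\le n]$ at $\det(y_{ij})$, and let $T=A\otimes_K S$, graded by $\deg y_{ij}=0$ and $\deg x_i=1$. I would define the universal change of coordinates $\tau:S\to T$ by $\tau(x_i)=\sum_j y_{ji}x_j$ and set $\tilde I = IT$ and $\tilde J=\tau(J)T$. For each $g=(a_{ij})\in\GL_n(K)$, specialization $y_{ij}\mapsto a_{ij}$ identifies the fibers of $\tilde I+\tilde J$ and $\tilde I\cdot\tilde J$ with the ideals $I+g(J)$ and $Ig(J)$ of $S$, respectively.

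The key input is Grothendieck's generic flatness lemma applied to the finitely generated $T$-modules $N:=T/(\tilde I+\tilde J)$ and $N':=T/(\tilde I\cdot\tilde J)$, viewed as modules over the Noetherian domain $A$: this yields a non-empty Zariski open $V\subset\GL_n(K)$ on which both $N$ and $N'$ are $A$-flat. Breaking up by degree, each $T_d=A\otimes_K S_d$ is a free $A$-module of finite rank, so each $N_d$ and $N'_d$ is finitely generated over $A$; and since a direct sum is $A$-flat iff each summand is, both $N_d$ and $N'_d$ are $A$-flat on $V$. A finitely generated flat module over a Noetherian ring is projective, hence locally free, and since $\GL_n(K)$ is an irreducible variety $V$ is connected, so the rank of each $N_d$ (and each $N'_d$) is constant on $V$.

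For $g\in V$ the rank of $N_d$ equals $\dim_K(S/(I+g(J)))_d$, so $H(I+g(J),d)$ is constant on $V$; from the short exact sequence
\[
0\to I\cap g(J)\to I\oplus g(J)\to I+g(J)\to 0,
\]
one reads $H(I\cap g(J),d)=H(I,d)+H(J,d)-H(I+g(J),d)$, which is therefore constant on $V$. The same argument applied to $N'$ gives that $H(Ig(J),d)$ is constant on $V$, and this single $V$ works simultaneously for both statements.

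The main obstacle is the commutative-algebra bookkeeping needed to transfer generic flatness of the total modules $N$ and $N'$ to constancy of each individual Hilbert function value: one must observe that flatness of a graded module is equivalent to flatness of every graded piece, that finitely generated flat modules over a Noetherian ring are locally free, and that irreducibility of $\GL_n(K)$ upgrades locally constant rank on $V$ to globally constant rank. The existence of the universal family and the invocation of generic flatness are then routine.
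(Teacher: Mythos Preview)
Your proof is correct and takes a genuinely different route from the paper. The paper reduces, as you do, to showing that $H(I+g(J),-)$ is constant on a nonempty open set, but it proves this by showing that the lex initial ideal $\init_\lex(I+g(J))$ is itself constant generically: it passes to the field of rational functions in the entries of $g$, runs Buchberger's algorithm there on generators of $I$ and of the universally transformed $J$, and then takes $U$ to be the locus where none of the finitely many rational-function coefficients produced in that computation vanish or have vanishing denominator. This is elementary and explicit, and it yields a slightly stronger conclusion (constancy of the initial ideal, not just of the Hilbert function), which is in keeping with the Gr\"obner-deformation arguments used elsewhere in the paper. Your approach trades this hands-on computation for Grothendieck's generic flatness, which is cleaner, handles $I+g(J)$ and $Ig(J)$ in one stroke, and generalizes immediately to any finite collection of algebraically-varying graded modules; the cost is that it is nonconstructive and invokes heavier machinery.
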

\begin{proof}
We prove the statement for $I \cap g(J)$
(the proof for $Ig(J)$ is similar).
It is enough to prove the same statement for $I+g(J)$.
We prove that $\init_\lex(I+g(J))$ is constant for a general $g \in \GL_n(K)$.

Let $t_{kl}$, where $1 \leq k,l \leq n$, be indeterminates,
$\tilde K=K(t_{kl}: 1 \leq k,l \leq n)$ the field of fractions of $K[t_{kl}: 1 \leq k,l \leq n]$
and $A=\tilde K [x_1,\dots,x_n]$. 
Let $\rho: S \to A$ be the ring map induced by $\rho(x_k)= \sum_{l=1}^n t_{lk} x_l$ for $k=1,2,\dots,n$,
and  $\tilde L= I A + \rho(J)A \subset A$.
Let $L \subset S$ be the monomial ideal with the same monomial generators
as $\init_\lex(\tilde L)$.
We prove $\init_\lex(I+g(J))=L$ for a general $g \in \GL_n(K)$.

Let $f_1,\dots,f_s$ be generators of $I$
and $g_1,\dots,g_t$ those of $J$.
Then the polynomials $f_1,\dots,f_s,\rho(g_1),\dots,\rho(g_t)$ are generators of $\tilde L$.
By the Buchberger algorithm,
one can compute a Gr\"obner basis of $\tilde L$ from
$f_1,\dots,f_s,\rho(g_1),\dots,\rho(g_t)$ by finite steps.
Consider all elements $h_1,\dots,h_m \in K(t_{kl}:1 \leq k,l \leq n)$
which are the coefficient of polynomials (including numerators and denominators of rational functions)
that appear in the process of computing a Gr\"obner basis of $\tilde L$ by the Buchberger algorithm.
Consider a non-empty Zariski open subset $U \subset \GL_n(K)$
such that $h_i(g) \in K \setminus \{0\}$ for any $g \in U$,
where $h_i(g)$ is an element obtained from $h_i$ by substituting $t_{kl}$ with entries of $g$.
By construction
$\init_\lex(I+g(J))=L$
for every $g \in U$.
\end{proof}

\begin{remark}\label{ConstantHF} The method used to prove the above lemma can be easily generalized to a number of situations.
For instance for a general $g \in \GL_n(K)$ and a finitely generated graded $S$-module $M,$ 
the Hilbert function of $\Tor_i(M,S/g(J))$ is  well defined for every $i$. Let  
$\mathbb F: 0 \stackrel{\varphi_{p+1}}{\longrightarrow} 
\mathbb F_p \stackrel{\varphi_p}{\longrightarrow} 
\cdots 
\longrightarrow
\mathbb F_1 \stackrel{\varphi_1}{\longrightarrow} 
\mathbb F_0
\stackrel{\varphi_0}{\longrightarrow}0$ 
be a graded free resolution of $M.$ Given a change of coordinates $g$, one first notes that for every $i=0,\dots,p$, the Hilbert function $H(\Tor_i(M,S/g(J)),-)$ is equal to the difference between the Hilbert function of $\rm{Ker}(\pi_{i-1} \circ \varphi_i)$ and the one of $\varphi_{i+1}(F_{i+1}) + F_i \otimes_S g(J)$ where $\pi_{i-1}: F_{i-1} \rightarrow F_{i-1} \otimes_S S/g(J)$ is the canonical projection. 
Hence we have
\begin{align}\label{H-TOR}
\nonumber H(\Tor_i & (M,S/g(J)),-)= \\
 &H(F_i, -) -H(\varphi_i(F_i)+    g(J) F_{i-1},-)
 + H(g(J) F_{i-1},-)\\
\nonumber &- H(\varphi_{i+1}(F_{i+1}) + g(J) F_i,-).
\end{align}
Clearly  $H(F_i,-)$ and $H(g(J) F_{i-1},-)$ do not depend on $g.$
Thus it is enough to show that, for a general $g$, the Hilbert functions of $\varphi_i(F_i)+g(J) F_{i-1}$ are well defined for all $i=0,\dots,p+1.$ This can be seen as in Lemma \ref{2-0}.
\end{remark}

Next, we present two lemmas which will allow us to reduce the proofs of the theorems in the third section to  combinatorial considerations regarding Borel-fixed ideals.

The first Lemma is probably clearly true to some experts,
but we include its proof for the sake of the exposition.
The ideas used in Lemma \ref{lemma2} are similar to that of \cite[Lemma 2.1]{Ca1} and they rely on the construction of a flat family and  on the use of the structure theorem for finitely generated modules over principal ideal domains. 

\begin{lemma}
\label{lemma1}
Let $M$ be a finitely generated graded $S$-module
and $J \subset S$ a homogeneous ideal.
For a general change of coordinates $g \in \GL_n(K)$ we have that
$\dim_K \Tor_i(M,S/g(J))_j \leq \dim_K \Tor_i(M,S/J)_j$ for all $i$ and for all $j.$
\end{lemma}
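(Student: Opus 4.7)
The plan is to construct a one-parameter flat family of ideals in which $S/J$ appears as the special fiber and $S/g(J)$ (for general $g$) as the generic fiber, and then to apply the structure theorem for finitely generated modules over a PID to compare their $\Tor$-dimensions.

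Fix a general $g \in \GL_n(K)$ so that $\dim_K \Tor_i^S(M, S/g(J))_j$ equals its generic value $c$ (well defined by Remark \ref{ConstantHF}) on a dense open subset $U \ni g$ of $\GL_n(K)$. Set $h(t) := (1-t) I + t g$, $T := K[t][\det h(t)^{-1}]$ (a localization of the PID $K[t]$, hence itself a PID), and $B := T[x_1, \ldots, x_n]$. Since $h(t) \in \GL_n(T)$, the ring map $\rho : S \to B$ given by $\rho(x_i) = \sum_l h(t)_{li}\, x_l$ extends to a $T$-algebra automorphism $\sigma$ of $B$. Setting $\tilde J := \rho(J) B = \sigma(JB)$ then gives $B/\tilde J \cong (S/J) \otimes_K T$ via $\sigma$, so in particular $B/\tilde J$ is $T$-free.

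Take a graded free resolution $P_\bullet \to M$ over $S$ and form the complex $C_\bullet := P_\bullet \otimes_S B/\tilde J$; by the previous step, each graded piece is a complex of free $T$-modules. Its homology $H_i(C_\bullet) = \Tor_i^B(M \otimes_K T, B/\tilde J)$ is finitely generated over the PID $T$ in each graded piece, so by the structure theorem $H_i(C_\bullet)_j \cong T^{r_{i,j}} \oplus (\text{torsion})$ for some nonnegative integer $r_{i,j}$. For any $t_0 \in K$ with $\det h(t_0) \neq 0$, the universal coefficient theorem (applicable because $C_\bullet$ is $T$-flat) gives
\[
\dim_K \Tor_i^S(M, S/h(t_0)(J))_j = \dim_K H_i(C_\bullet)_j \otimes_T T/(t - t_0) + \dim_K \Tor_1^T(H_{i-1}(C_\bullet)_j,\, T/(t - t_0)),
\]
which is $\geq r_{i,j}$ for all such $t_0$ and equals $r_{i,j}$ for every $t_0$ outside a finite exceptional set (namely those at which a torsion summand of $H_{i-1}(C_\bullet)_j$ or $H_i(C_\bullet)_j$ is supported).

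To conclude, note that $h^{-1}(U) \subset \mathbb A^1_K$ is a nonempty open subset (it contains $t = 1$), on which the Tor-dimension equals $c$. Intersecting with the open subset on which the displayed formula takes the value $r_{i,j}$ yields a nonempty open subset on which the Tor-dimension equals both $r_{i,j}$ and $c$, forcing $r_{i,j} = c$. Specializing the displayed identity at $t_0 = 0$ then gives $\dim_K \Tor_i^S(M, S/J)_j \geq r_{i,j} = c = \dim_K \Tor_i^S(M, S/g(J))_j$, as required. The main technical subtlety lies in securing the $T$-flatness of $B/\tilde J$, which would fail for an arbitrary change-of-coordinates family; it is secured here by choosing the line $h(t)$ through the identity, so that $\det h(0) = 1$ becomes a unit after localization and $\rho$ upgrades to a $T$-algebra automorphism of $B$, without which the universal coefficient theorem could not be applied in the clean form above.
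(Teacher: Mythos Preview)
Your proof is correct, but it takes a genuinely different route from the paper's. The paper argues much more directly: using the formula \eqref{H-TOR} from Remark~\ref{ConstantHF}, it reduces the inequality to showing that $H(\varphi_i(F_i)+g(J)F_{i-1},d)\geq H(\varphi_i(F_i)+JF_{i-1},d)$ for general $g$, and then observes that this dimension is the rank of a matrix whose entries are polynomials in the coordinates of $g$, hence is maximal on a nonempty Zariski open set. No flat family, no PID structure theorem---just lower semicontinuity of rank.

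Your argument, by contrast, builds a one-parameter family through the identity and $g$, secures flatness over the localized PID $T$ by the neat trick of making $\rho$ a $T$-automorphism, and then invokes the universal coefficient theorem. This is essentially the degeneration/PID technique that the paper reserves for the \emph{next} lemma (Lemma~\ref{lemma2}), where the family connects $J$ to $\init_\ww(J)$ and there is no elementary rank argument available. So your approach is more uniform with that later lemma and more conceptual, while the paper's proof of this particular lemma is shorter and more elementary. Both are valid; the paper simply chose the lightest tool that sufficed here.
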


\begin{proof} 
Let  $\mathbb F$ be a resolution of $M,$ as in Remark \ref{ConstantHF}. Let $i$, $0\leq i \leq p+1$ and notice that, by equation \eqref{H-TOR}, it is sufficient to show: $H(\varphi_i(F_i)+g(J) F_{i-1},-)\geq 
H(\varphi_i(F_i)+JF_{i-1},-).$ We fix a degree $d$ and consider the monomial basis of $ (F_{i-1})_d.$ 
Given a change of coordinates $h=(a_{kl}) \in \GL_n(K)$ we present the vector space $V_d=(\varphi_i(F_i)+h(J)F_{i-1})_d$ with respect to this basis. The dimension of $V_d$ equals the rank of a matrix whose entries are polynomials in the  $a_{kl}$'s with coefficients in $K.$ Such a rank is maximal when the change of coordinates $h$ is general.
\end{proof}


For a vector $\ww=(w_1,\ldots,w_n) \in \ZZ_{\geq 0}^n$,
let $\init_\ww (I)$ be the initial ideal of a homogeneous ideal $I$ with respect to the
weight order $>_\ww$
(see \cite[p.\ 345]{Ei}).
Let $T$ be a new indeterminate and 
$R=S[T]$.
For $\mathbf a=(a_1,\dots,a_n) \in \ZZ_{\geq 0}^n$,
let $x^{\mathbf a}=x_1^{a_1} x_2^{a_2} \cdots x_n^{a_n}$
and $(\mathbf a, \ww)= a_1w_1 + \cdots + a_n w_n$.
For a polynomial $f= \sum_{\mathbf a \in \ZZ_{\geq 0}^n} c_{\mathbf a} x^{\mathbf a}$,
where $c_{\mathbf a} \in K$,
let $b= \max \{ (\mathbf a,\ww) : c_{\mathbf a} \ne 0\}$
and
$$\tilde f = T^b \left(\sum_{\mathbf a \in \ZZ_{\geq 0}^n} T^{-(\mathbf a,\ww)}c_{\mathbf a} x^{\mathbf a}\right) \in R.$$
Note that $\tilde f$ can be written as $\tilde f=\init_\ww(f) + T g$ where $g \in R$.
For an ideal $I \subset S$,
let $\tilde I =(\tilde f :f \in I) \subset R$.
For $\lambda \in K \setminus\{0\}$,
let $D_{\lambda,\ww}$ be the diagonal change of coordinates defined by $D_{\lambda,\ww}(x_i)=\lambda^{-w_i} x_i$.
From the definition, we have
$$R/\big(\tilde I +(T)\big) \cong S/ \init_\ww(I)$$
and
$$R/\big(\tilde I +(T-\lambda)\big) \cong S/D_{\lambda,\ww}(I)$$
where $\lambda \in K \setminus \{0\}$.
Moreover $(T-\lambda)$ is a non-zero divisor of $R/\tilde I$ for any $\lambda \in K$.
See \cite[\S 15.8]{Ei}.

\begin{lemma}
\label{lemma2}
Fix an integer $j$.
Let $\ww \in \ZZ_{\geq 0}^n$,
$M$ a finitely generated graded $S$-module and $J \subset S$ a homogeneous ideal.
For a general $\lambda \in K$, one has
\[
\dim_K \Tor_i \big(M,S/\init_\ww(J)\big)_j\geq \dim_K \Tor
_i\big(M, S/D_{\lambda,\ww}(J)\big)_j
\ \mbox{ for all $i$.}
\]
\end{lemma}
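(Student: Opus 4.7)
The plan is to use the standard flat-family construction and reduce the inequality to the structure theorem for finitely generated modules over the PID $K[T]$. Set $N := R/\tilde J$, a graded $R$-module (with $\deg T = 0$). Since $T-\lambda$ is a nonzerodivisor on $N$ for every $\lambda \in K$, each homogeneous component $N_j$ is torsion-free over $K[T]$; being also a quotient of the finite free $K[T]$-module $R_j = S_j \otimes_K K[T]$, it is a finitely generated torsion-free module over a PID, and hence $K[T]$-free of finite rank.

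Next, fix a graded free $S$-resolution $\mathbb F_\bullet$ of $M$ and consider $\mathbb F_\bullet \otimes_S N$. In each fixed degree $j$ this is a complex of finitely generated free $K[T]$-modules whose differentials are $K[T]$-linear (the $K[T]$-action lives on the $N$ factor and commutes with the $S$-linear maps from $\mathbb F_\bullet$), so its homology $P_i := \Tor_i(M,N)_j$ is a finitely generated $K[T]$-module. The structure theorem then gives a decomposition
\[
P_i \;\cong\; K[T]^{r_i} \oplus \bigoplus_{k=1}^{s_i} K[T]/(f_{i,k})
\]
for some $r_i \geq 0$ and nonzero polynomials $f_{i,k} \in K[T]$.

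Now apply $\Tor_\bullet(M,-)$ to the short exact sequence $0 \to N \xrightarrow{T-\lambda} N \to N/(T-\lambda)N \to 0$, and recall that $N/(T)N \cong S/\init_\ww(J)$ while $N/(T-\lambda)N \cong S/D_{\lambda,\ww}(J)$ for $\lambda \neq 0$. The resulting long exact sequence breaks, in each fixed degree $j$, into short exact sequences
\[
0 \longrightarrow P_i/(T-\lambda)P_i \longrightarrow \Tor_i\bigl(M, N/(T-\lambda)N\bigr)_j \longrightarrow P_{i-1}[T-\lambda] \longrightarrow 0,
\]
where $P_{i-1}[T-\lambda]$ denotes the submodule annihilated by $T-\lambda$. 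From the decomposition above one reads off $\dim_K P_i/(T-\lambda)P_i = r_i + \#\{k : f_{i,k}(\lambda)=0\}$ and $\dim_K P_{i-1}[T-\lambda] = \#\{k : f_{i-1,k}(\lambda)=0\}$. Consequently, for every $\lambda$ outside the finite set of roots of all the $f_{i,k}$ and $f_{i-1,k}$ the two counts vanish and $\dim_K \Tor_i(M, S/D_{\lambda,\ww}(J))_j = r_i$, while specializing to $\lambda = 0$ only adds nonnegative quantities to $r_i$, yielding $\dim_K \Tor_i(M, S/\init_\ww(J))_j \geq r_i$ and hence the claim.

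The main obstacle I expect is the initial verification of the $K[T]$-freeness of $N_j$; once that is in place, the rest is a routine bookkeeping exercise with the PID structure theorem and the long exact sequence of Tor. One also has to take care that $\Tor_i(M,N)$ is formed over $S$ (as Tor against the $S$-module $N/(T-\lambda)N$ requires) so that the $K[T]$-linear structure on the resolution complex is preserved; this is automatic since the differentials of $\mathbb F_\bullet$ are $S$-linear and hence commute with multiplication by $T$ on the second tensor factor.
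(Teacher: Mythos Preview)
Your proof is correct and follows essentially the same route as the paper: the flat family $R/\tilde J$, the structure theorem over $K[T]$ applied to $(\Tor_i)_j$, and the long exact sequence coming from multiplication by $T-\lambda$. The only cosmetic difference is that you compute $\Tor_i^S(M, R/\tilde J)$ directly, whereas the paper computes $\Tor_i^R(M\otimes_S R, R/\tilde J)$ and then invokes a base-change isomorphism (using that $T-\lambda$ is regular on $R$ and on $\tilde M$) to identify the middle term with $\Tor_i^S$; since $R$ is $S$-flat these two Tor modules coincide, so the arguments are equivalent.
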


\begin{proof} 
Consider the ideal $\tilde {J} \subset R$ defined as above.
Let $\tilde M = M \otimes_S R$ and
$T_i=\Tor_i^{R}(\tilde M,R/\tilde{J})$.
By the structure theorem for
modules over a PID (see \cite[p.\ 149]{La}),
we have
$$(T_i)_j\cong K[T]^{a_{ij}}
\bigoplus A_{ij}$$
as a finitely generated $K[T]$-module,
where $a_{ij} \in \ZZ_{\geq 0}$ and where $A_{ij}$ is the torsion submodule.
Moreover $A_{ij}$ is a module of the form
$$A_{ij}\cong \bigoplus_{h=1}^{b_{ij}} K [T]/(P^{i,j}_{h}),$$
where $P^{i,j}_h$ is a non-zero polynomial in $K[T]$.
Set $l_{\lambda}=T-\lambda$.
Consider the exact sequence
\begin {eqnarray}
\label{aa}
\begin{CD} 0 @>>> R/\tilde{J} @>\cdot l_{\lambda}>>
R/\tilde{J} @>>> R/\big((l_{\lambda})+\tilde{J} \big) @>>> 0.
\end{CD}
\end {eqnarray}
By considering the long exact sequence induced by $\Tor^R_i(\tilde M,-),$ we have the following exact sequence
\begin{equation}\label{bo} 0\longrightarrow T_i/l_{\lambda} T_i \longrightarrow
\Tor_i^{R}\big(\tilde M,R/\big((l_{\lambda})+\tilde{J}\big)\big) \longrightarrow
K_{i-1} \longrightarrow 0,
\end{equation}
where $K_{i-1}$ is the kernel of the map $T_{i-1} \xrightarrow{\cdot l_{\lambda}} T_{i-1}$.
Since $l_{\lambda}$ is a regular element for $R$ and $\tilde M$,
the middle term in (\ref{bo}) is isomorphic to
\begin{eqnarray*}
\Tor_i^{R/(l_\lambda)} \big(\tilde M /l_\lambda \tilde M, R/\big((l_{\lambda})+\tilde J \big)\big)
=\left\{
\begin{array}{lll}
\Tor_i^S \big(M,S/\init_\ww(J)\big), & \mbox{ if } \lambda=0,\\
\Tor_i^S \big(M,S/D_{\lambda,\ww}(J)\big), & \mbox{ if } \lambda\ne0
\end{array}
\right.
\end{eqnarray*}
(see \cite[p.\ 140]{Mat}).
By taking the graded component of degree $j$ in (\ref{bo}),
we obtain
\begin{eqnarray}
\label{banngou} 
\begin{array}{lll}
\dim_K
\Tor_i^{S}\big(M,S/\init_\ww (J) \big)_j &=& a_{ij} + 
\# \{P^{ij}_h : P^{i,j}_h(0)=0\}\\
&& +  \# \{P^{i-1,j}_h : P^{i-1,j}_h(0)=0\},
\end{array}
\end{eqnarray}
where $\# X$ denotes the cardinality of a finite set $X$,
and
\begin{eqnarray}
\label{yon}
\dim_K
\Tor_i^{S}\big(M,S/D_{\lambda,\ww}(J) \big)_j &=& a_{ij}
\end{eqnarray}
for a general $\lambda \in K$.
This proves the desired inequality.
\end{proof}

\begin{corollary}
\label{add}
With the same notation as in Lemma \ref{lemma2},
for a general $\lambda \in K$,
\[
\dim_K \Tor_i \big(M,\init_\ww(J)\big)_j \geq \dim_K \Tor_i \big(M, D_{\lambda,\ww}(J) \big)_j
\mbox{ for all }i.
\]
\end{corollary}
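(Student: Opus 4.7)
The plan is to adapt the proof of Lemma \ref{lemma2} to the present setting, using $\tilde J$ in place of $R/\tilde J$. I retain the notation of that lemma: $R=S[T]$, $\tilde M = M\otimes_S R$, $\tilde J\subset R$ is the homogenized ideal, and $l_\lambda = T-\lambda$. Since $l_\lambda$ is a non-zero divisor on $R$, it is also a non-zero divisor on the submodule $\tilde J$, so there is a short exact sequence
\[
0 \longrightarrow \tilde J \xrightarrow{\,\cdot\, l_\lambda} \tilde J \longrightarrow \tilde J/l_\lambda \tilde J \longrightarrow 0
\]
of $R$-modules.

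First I would identify the cokernel $\tilde J/l_\lambda \tilde J$, viewed as an $R/(l_\lambda) = S$-module, with the ideal we care about. Tensoring $0\to\tilde J\to R\to R/\tilde J\to 0$ with $R/(l_\lambda)$ and using that $l_\lambda$ is regular on $R/\tilde J$ (which kills $\Tor^R_1(R/(l_\lambda), R/\tilde J)$), one obtains an injection $\tilde J/l_\lambda \tilde J \hookrightarrow R/(l_\lambda) = S$ whose image is $(\tilde J + (l_\lambda))/(l_\lambda)$. By the identifications recalled just before Lemma \ref{lemma2}, this image equals $\init_\ww(J)$ when $\lambda = 0$ and $D_{\lambda,\ww}(J)$ when $\lambda \ne 0$.

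The final step is to apply $\Tor^R_\bullet(\tilde M, -)$ to the displayed short exact sequence and mimic the bookkeeping in the proof of Lemma \ref{lemma2}. Setting $T'_i = \Tor^R_i(\tilde M, \tilde J)$ and decomposing each graded component $(T'_i)_j$ via the structure theorem over the PID $K[T]$ as $K[T]^{a'_{ij}} \oplus \bigoplus_h K[T]/(Q^{i,j}_h)$, the same computation that produced (\ref{banngou}) and (\ref{yon}) now yields
\[
\dim_K \Tor^R_i\bigl(\tilde M, \tilde J/l_\lambda \tilde J\bigr)_j = a'_{ij} + \#\{Q^{i,j}_h : Q^{i,j}_h(\lambda) = 0\} + \#\{Q^{i-1,j}_h : Q^{i-1,j}_h(\lambda) = 0\}.
\]
By the change of rings formula, valid because $l_\lambda$ is regular on both $R$ and $\tilde M$, the left-hand side equals $\dim_K \Tor^S_i(M, \init_\ww(J))_j$ when $\lambda = 0$ and $\dim_K \Tor^S_i(M, D_{\lambda,\ww}(J))_j$ for general $\lambda\in K$. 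At a general $\lambda$ none of the $Q^{\cdot,j}_h$ vanish, so only the free rank $a'_{ij}$ contributes, whereas at $\lambda = 0$ the torsion summands add non-negative corrections, producing the asserted inequality. The only substantive point to check is that Lemma \ref{lemma2}'s argument transports verbatim with $\tilde J$ in place of $R/\tilde J$; since that argument uses only regularity of $l_\lambda$ on the second module (available here because $\tilde J \subset R$) and the structure theorem, this should be entirely routine.
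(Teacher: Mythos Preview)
Your argument is correct, but it takes a different route from the paper's. The paper does not rerun the deformation argument with $\tilde J$ in place of $R/\tilde J$. Instead, it reduces the statement for $J$ to the already-proved Lemma~\ref{lemma2} via the short exact sequence $0\to I\to S\to S/I\to 0$: for $i\geq 1$ this gives $\Tor_i(M,I)\cong\Tor_{i+1}(M,S/I)$ directly, while for $i=0$ one obtains
\[
\dim_K\Tor_0(M,I)_j=\dim_K\Tor_1(M,S/I)_j+\dim_K M_j-\dim_K\Tor_0(M,S/I)_j,
\]
so the $i=0$ case reduces to showing that the drop in $\Tor_1$ is at least the drop in $\Tor_0$ when passing from $\init_\ww(J)$ to $D_{\lambda,\ww}(J)$. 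That last inequality is read off from the explicit formulas \eqref{banngou} and \eqref{yon} established inside the proof of Lemma~\ref{lemma2}. Your approach is more self-contained and uniform in $i$ (no separate case $i=0$), at the cost of essentially reproving the lemma; the paper's approach reuses the lemma wholesale but must dip back into its proof to handle $i=0$. Both are short; yours is arguably the cleaner packaging.
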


\begin{proof}
For any homogeneous ideal $I \subset S$,
by considering the long exact sequence induced by $\Tor_i(M,-)$  from the short exact sequence
$0 \longrightarrow I \longrightarrow S \longrightarrow S/I \longrightarrow 0$ we have
$$\Tor_i(M,I) \cong \Tor_{i+1}(M,S/I)
\mbox{ for }i \geq 1$$
and
$$\dim_K \Tor_0(M,I)_j = \dim_K \Tor_1(M,S/I)_j + \dim_K M_j - \dim_K \Tor_0(M,S/I)_j.$$
Thus by Lemma \ref{lemma2} it is enough to prove that
\begin{eqnarray*}
&&\dim_K \Tor_1\big(M,S/\init_\ww(J)\big)_j -\dim_K \Tor_1\big(M,S/D_{\lambda,\ww}(J)\big)_j\\
&&\geq \dim_K \Tor_0\big(M,S/\init_\ww(J)\big)_j -\dim_K \Tor_0\big(M,S/D_{\lambda,\ww}(J)\big)_j.
\end{eqnarray*}
This inequality follows from (\ref{banngou}) and (\ref{yon}).
\end{proof}

\begin{proposition}
\label{2.3}
Fix an integer $j$.
Let $I \subset S$ and $J \subset S$ be homogeneous ideals.
Let $\ww,\ww' \in \ZZ_{\geq 0}^n$.
For a general change of coordinates $g \in \GL_n(K)$,
\begin{itemize}
\item[(i)]
$\dim_K \Tor_i(S/I,S/g(J))_j
\leq \dim_K \Tor_i (S/\init_{\ww}(I), S/{\init_{\ww'}}(J))_j
\ \mbox{ for all }i.$
\item[(ii)]
$\dim_K \Tor_i(I,S/g(J))_j
\leq \dim_K \Tor_i (\init_{\ww}(I), S/{\init_{\ww'}}(J))_j
\ \mbox{ for all }i.$
\end{itemize}
\end{proposition}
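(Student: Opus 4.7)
The plan is to chain three inequalities that interpolate between the pair $(S/I,S/g(J))$ and the pair $(S/\init_\ww(I),S/\init_{\ww'}(J))$. First, I would apply Lemma~\ref{lemma2} twice, together with the symmetry of $\Tor$, to show that for general $\mu_1,\mu_2\in K$,
\[
\dim_K \Tor_i(S/\init_\ww(I), S/\init_{\ww'}(J))_j \geq \dim_K \Tor_i(S/D_{\mu_1,\ww}(I), S/D_{\mu_2,\ww'}(J))_j.
\]
The first application, with $M=S/\init_\ww(I)$, weight $\ww'$, and ideal $J$, replaces $\init_{\ww'}(J)$ by $D_{\mu_2,\ww'}(J)$; the second, with $M=S/D_{\mu_2,\ww'}(J)$, weight $\ww$, and ideal $I$, combined with Tor-symmetry, then replaces $\init_\ww(I)$ by $D_{\mu_1,\ww}(I)$.

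Since $D_{\mu_1,\ww}^{-1}\in\GL_n(K)$ is a graded automorphism of $S$, applying it to both arguments of $\Tor$ preserves the graded dimensions, so with $h:=D_{\mu_1,\ww}^{-1}D_{\mu_2,\ww'}\in\GL_n(K)$,
\[
\dim_K \Tor_i(S/D_{\mu_1,\ww}(I), S/D_{\mu_2,\ww'}(J))_j = \dim_K \Tor_i(S/I, S/h(J))_j.
\]
Finally, I apply Lemma~\ref{lemma1} with $M=S/I$ and the fixed ideal $h(J)$: for general $g'\in\GL_n(K)$,
\[
\dim_K\Tor_i(S/I, S/(g'h)(J))_j \leq \dim_K\Tor_i(S/I, S/h(J))_j.
\]
As $g'$ varies over a Zariski open subset of $\GL_n(K)$, so does $g:=g'h$; by Remark~\ref{ConstantHF}, the function $g\mapsto \dim_K\Tor_i(S/I,S/g(J))_j$ is constant on a Zariski open subset, so the left-hand side above equals $\dim_K\Tor_i(S/I,S/g(J))_j$ for general $g$. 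Chaining the three inequalities gives~(i).

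Part~(ii) follows by exactly the same strategy, using Corollary~\ref{add} in place of Lemma~\ref{lemma2} for the application deforming the ideal-valued first factor, and Lemma~\ref{lemma1} applied with $M=I$. The main conceptual obstacle is that Lemma~\ref{lemma2} only bounds $\Tor$ with $\init_\ww(J)$ from above by $\Tor$ with the \emph{specific} diagonal deformation $D_{\lambda,\ww}(J)$, not by $\Tor$ with $g(J)$ for a general $g\in\GL_n(K)$. Bridging this gap is the heart of the argument: one must deform both factors simultaneously, consolidate the two diagonal twists into a single fixed $h\in\GL_n(K)$, and only then widen $h$ into a truly generic change of coordinates via Lemma~\ref{lemma1}.
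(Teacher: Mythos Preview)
Your proof is correct and follows essentially the same route as the paper: deform both factors to diagonal twists via Lemma~\ref{lemma2} (resp.\ Corollary~\ref{add} for the ideal-valued factor), absorb one twist into the other by a graded automorphism, and then invoke Lemma~\ref{lemma1} to pass from the specific $h$ to a general $g$. The paper's proof is terser---it chains the three inequalities in a single display and leaves the Tor-symmetry and the ``$g'h$ is general when $g'$ is'' steps implicit---but the argument is the same.
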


\begin{proof}
We prove (ii) (the proof for (i) is similar).
By Lemmas \ref{lemma1} and \ref{lemma2} and Corollary \ref{add}, we have
\begin{eqnarray*}
\dim_K \Tor_i \big(\init_{\ww}(I), S/\init_{\ww'}(J)\big)_j 
&\geq& \dim_K \Tor_i \big(D_{\lambda_1,\ww}(I), S/D_{\lambda_2,\ww'}(J)\big)_j \\
&=& \dim_K \Tor_i \big(I, S/D^{-1}_{\lambda_1,\ww} \big(D_{\lambda_2,\ww'}(J)\big)\big)_j\\
&\geq& \dim_K \Tor_i\big(I,S/g(J)\big)_j,
\end{eqnarray*}
as desired,
where $\lambda_1,\lambda_2$ are general elements in $K$.
\end{proof}

\begin{remark}
Let $\ww'=(1,1,\dots,1)$ and note that  the composite of two general changes of coordinates is still general. By replacing $J$ by $h(J)$ for a general change of coordinates $h,$ from Proposition \ref{2.3}(i) it follows that 
\[
\dim_K \Tor_i(S/I,S/h(J))_j \leq \dim_K \Tor_i\big(S/\init_{>_{\sigma}}(I),S/h(J))_j
\]
for any term order $>_\sigma$.

The above fact gives, as a special case, an affirmative answer to \cite[Question 6.1]{Co}.
This was originally proved in the thesis of the first author \cite{Ca2}.
We mention it here because there seem to be no published article which includes the proof of this fact.
\end{remark}

\begin{theorem}
\label{2.5}
Fix an integer $j$.
Let $I \subset S$ and $J \subset S$ be homogeneous ideals.
For a general change of coordinates $g \in \GL_n(K)$,
\begin{itemize}
\item[(i)] $\dim_K \Tor_i(S/I,S/g(J))_j \leq \dim_K \Tor_i(S/\Gin_\lex(I),S/\Gin_\rlex (J))_j
\ \ \mbox{for all }i.$
\item[(ii)] $\dim_K \Tor_i(I,S/g(J))_j \leq \dim_K \Tor_i(\Gin_\lex(I),S/\Gin_\rlex (J))_j
\ \ \mbox{for all }i.$
\end{itemize}
\end{theorem}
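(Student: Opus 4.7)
The plan is to deduce Theorem 2.5 from Proposition 2.3 by exploiting two standard facts: generic initial ideals are realized as initial ideals of $g(I)$ for $g$ in a Zariski open subset of $\GL_n(K)$ (Galligo), and any term order agrees with some integer weight order on any prescribed finite set of monomials (so on the finitely many Buchberger-relevant monomials of a specific ideal). I will prove (i); part (ii) is identical with Proposition \ref{2.3}(ii) in place of (i).

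First I would fix a specific (not general) change of coordinates $h_1 \in \GL_n(K)$ such that $\init_\lex(h_1(I)) = \Gin_\lex(I)$, and a specific $h_2 \in \GL_n(K)$ such that $\init_\rlex(h_2(J)) = \Gin_\rlex(J)$. Then I would pick weight vectors $\ww, \ww' \in \ZZ_{\geq 0}^n$ with
\[
\init_\ww\bigl(h_1(I)\bigr) = \init_\lex\bigl(h_1(I)\bigr) = \Gin_\lex(I), \qquad \init_{\ww'}\bigl(h_2(J)\bigr) = \init_\rlex\bigl(h_2(J)\bigr) = \Gin_\rlex(J).
\]
Such $\ww, \ww'$ exist: since $h_1(I)$ and $h_2(J)$ have finite reduced Gr\"obner bases (with respect to $>_\lex$ and $>_\rlex$ respectively), it suffices to choose weights that refine the given term order on the finitely many monomials appearing, which is a routine linear algebra exercise.

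Next, apply Proposition \ref{2.3}(i) to the pair of ideals $h_1(I)$ and $h_2(J)$ with these weight vectors. It yields, for a general $g \in \GL_n(K)$,
\[
\dim_K \Tor_i\bigl(S/h_1(I),\, S/g(h_2(J))\bigr)_j \leq \dim_K \Tor_i\bigl(S/\Gin_\lex(I),\, S/\Gin_\rlex(J)\bigr)_j.
\]
The right-hand side is already what we want. For the left-hand side, applying the automorphism $h_1^{-1}$ to both module arguments of $\Tor$ leaves the $K$-dimension unchanged, so
\[
\dim_K \Tor_i\bigl(S/h_1(I),\, S/g(h_2(J))\bigr)_j = \dim_K \Tor_i\bigl(S/I,\, S/(h_1^{-1} g\, h_2)(J)\bigr)_j.
\]
Setting $g' = h_1^{-1} g h_2$, as $g$ ranges over a Zariski open subset of $\GL_n(K)$ so does $g'$ (left and right multiplication by fixed invertible matrices are homeomorphisms of $\GL_n(K)$). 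Combining Lemma \ref{2-0}, which guarantees that the relevant Hilbert functions are constant on a Zariski open set (and the same holds for Tor by Remark \ref{ConstantHF}), the inequality holds for a general $g' \in \GL_n(K)$, which is the desired statement.

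The main obstacle is really conceptual rather than technical: one must be comfortable mixing the ``generic $g$'' appearing in Proposition \ref{2.3} (which varies) with the specific auxiliary coordinate changes $h_1, h_2$ (which are fixed once and for all so that their lex/oplex initial ideals coincide with the generic initial ideals), and then identify the composition $h_1^{-1} g h_2$ as again generic. Everything else reduces to the already proven Proposition \ref{2.3} together with the elementary invariance of $\Tor$ under simultaneous change of coordinates.
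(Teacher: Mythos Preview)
Your proof is correct and follows essentially the same approach as the paper's own argument: reduce to the case where $\init_\lex(I)=\Gin_\lex(I)$ and $\init_\rlex(J)=\Gin_\rlex(J)$, choose weight vectors $\ww,\ww'$ realizing these term-order initial ideals (the paper cites \cite[Proposition~15.16]{Ei} for this), and apply Proposition~\ref{2.3}. The paper compresses your careful handling of the auxiliary changes of coordinates $h_1,h_2$ and the identification $g'=h_1^{-1}gh_2$ into the phrase ``without loss of generality,'' but what you have written is exactly the justification underlying that phrase.
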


\begin{proof}
Without loss of generality,
we may assume $\init_\lex(I)=\Gin_\lex(I)$  and that $\init_\rlex(J)=\Gin_\rlex(J)$. 
It follows from \cite[Propositin 15.16]{Ei} that there are vectors $\ww, \ww' \in \ZZ_{\geq 0}^n$ such that
$\init_\ww(I)=\init_\lex(I)$ and $\init_{\ww'}(g(J))=\Gin_\rlex(J)$.
Then the desired inequality follows from Proposition \ref{2.3}.
\end{proof}

Since $\Tor_0(S/I,S/J)\cong S/(I+J)$ and $\Tor_0(I,S/J)\cong I/IJ$,
we have the next corollary.

\begin{corollary}
\label{2.6}
Let $I \subset S$ and $J \subset S$ be homogeneous ideals.
For a general change of coordinates $g \in \GL_n(K)$,
\begin{itemize}
\item[(i)] $H(I \cap g(J) ,d) \leq H(\Gin_\lex (I)\cap \Gin_\rlex(J),d)$ for all $d \geq 0$.
\item[(ii)] $H(Ig(J),d) \geq H(\Gin_\lex(I)\Gin_\rlex(J),d)$ for all $d \geq 0$.
\end{itemize}
\end{corollary}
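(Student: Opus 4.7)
The plan is to deduce both inequalities from Theorem \ref{2.5} by specializing to $i=0$, combining it with the standard identifications $\Tor_0(S/I,S/J) \cong S/(I+J)$ and $\Tor_0(I,S/J) \cong I/IJ$, and then doing the Hilbert function bookkeeping via short exact sequences. Since generic initial ideals preserve Hilbert functions and $g$ is a change of coordinates, we have $H(I,d) = H(\Gin_\lex(I),d)$ and $H(g(J),d) = H(J,d) = H(\Gin_\rlex(J),d)$ throughout.

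For part (i), I would start with the canonical short exact sequence
\[
0 \longrightarrow I \cap g(J) \longrightarrow I \oplus g(J) \longrightarrow I + g(J) \longrightarrow 0,
\]
which gives $H(I \cap g(J),d) = H(I,d) + H(g(J),d) - H(I+g(J),d)$, and the analogous identity for the pair $(\Gin_\lex(I), \Gin_\rlex(J))$. Subtracting, the goal reduces to the lower bound $H(I+g(J),d) \geq H(\Gin_\lex(I) + \Gin_\rlex(J), d)$, or equivalently $H(S/(I+g(J)),d) \leq H(S/(\Gin_\lex(I) + \Gin_\rlex(J)),d)$. Under the identification $\Tor_0(S/I,S/g(J)) \cong S/(I+g(J))$ this is exactly Theorem \ref{2.5}(i) with $i=0$.

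For part (ii), I would use $\Tor_0(I,S/g(J)) \cong I/Ig(J)$ to write
\[
H(Ig(J),d) = H(I,d) - \dim_K \Tor_0(I,S/g(J))_d,
\]
and similarly $H(\Gin_\lex(I)\Gin_\rlex(J),d) = H(\Gin_\lex(I),d) - \dim_K \Tor_0(\Gin_\lex(I),S/\Gin_\rlex(J))_d$. Since the leading terms agree, the inequality $H(Ig(J),d) \geq H(\Gin_\lex(I)\Gin_\rlex(J),d)$ is equivalent to an upper bound on $\dim_K \Tor_0(I,S/g(J))_d$, which is precisely Theorem \ref{2.5}(ii) with $i=0$.

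Since all the substantial work has been carried out in Theorem \ref{2.5} (and ultimately in Lemmas \ref{lemma1}, \ref{lemma2}, and Corollary \ref{add}), there is no real obstacle here: the only care needed is to verify that the various Hilbert functions involving $I$, $g(J)$, $\Gin_\lex(I)$, and $\Gin_\rlex(J)$ match up on both sides of the reduction, which follows from the invariance of Hilbert functions under generic initial ideals.
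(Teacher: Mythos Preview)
Your proposal is correct and follows exactly the route indicated in the paper: the corollary is deduced from Theorem \ref{2.5} at $i=0$ via the identifications $\Tor_0(S/I,S/g(J))\cong S/(I+g(J))$ and $\Tor_0(I,S/g(J))\cong I/Ig(J)$, together with the fact that $H(I,-)=H(\Gin_\lex(I),-)$ and $H(g(J),-)=H(\Gin_\rlex(J),-)$. The paper compresses this into a single sentence, but your spelled-out Hilbert function bookkeeping is precisely the intended argument.
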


We conclude this section with a result regarding the Krull dimension of certain Tor modules. 
We show how Theorem \ref{2.5} can be used to give a quick proof of Proposition \ref{MiSp}, which  is a special case (for the variety $X=\mathbb{P}^{n-1}$ and the algebraic group $\SL_n$) of the main Theorem of  \cite{MS}. 

Recall that generic initial ideals are \textit{Borel-fixed}, that is they are fixed under the action of the Borel subgroup of $\GL_n(K)$ consisting of all the upper triangular invertible matrices. In particular for an ideal $I$ of $S$ and an upper triangular matrix $b\in \GL_n(K)$ one has $b(\Gin_\lex(I))= \Gin_\lex(I).$ Similarly, if we denote by  $op$ the change of coordinates of 
$S$ which sends $x_i$ to $x_{n-i}$ for all $i=1,\dots,n,$ we have that $b( op (\Gin_\rlex(I)))= op (\Gin_\rlex(I)).$

 We call \textit{opposite Borel-fixed} an ideal $J$ of $S$ such that $op(J)$  is Borel-fixed (see  
\cite[\S 15.9]{Ei} for more details on the combinatorial properties of Borel-fixed ideals).

It is easy to see that if $J$ is Borel-fixed, then so is $(x_1,\dots,x_i)+J$ for every $i=1,\dots,n.$ Furthermore if $j$ is an integer equal to $\min \{i : x_i\not \in J \}$ then $J:x_j$ is also Borel-fixed; in this case $I$ has a minimal generator divisible by $x_j$ or  $I=(x_1,\dots,x_{j-1}).$ Analogous statements hold for opposite Borel-fixed ideals.

Let $I$ and $J$ be ideals generated by linear forms. If we assume that $I$ is Borel fixed and that $J$ is opposite Borel fixed, then there exist $1\leq i,j \leq n $ such that $I=(x_1,\dots,x_i)$ and $J=(x_j,\dots,x_n).$  An easy computation shows that the Krull dimension of $\Tor_i(S/I,S/J)$ is always zero when $i>0.$ 

More generally one has

\begin{proposition}[Miller--Speyer]\label{MiSp} Let $I$ and $J$ be two homogeneous ideals of $S.$ For a  general change of coordinates $g$, the Krull dimension of $\Tor_i(S/I,S/g(J))$ is zero for all $i>0.$ 
\end{proposition}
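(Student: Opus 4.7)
The plan is to use Theorem~\ref{2.5} to reduce the statement to a combinatorial one about Borel-fixed and opposite Borel-fixed monomial ideals, and then to carry out a Noetherian induction via the short exact sequences highlighted in the preceding discussion.

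First I would observe that a finitely generated graded $S$-module has Krull dimension zero exactly when it has finite length, i.e., when its Hilbert function vanishes in all sufficiently large degrees. Theorem~\ref{2.5}(i) gives, for a general $g \in \GL_n(K)$, the pointwise inequality
\[
\dim_K \Tor_i\bigl(S/I, S/g(J)\bigr)_j \leq \dim_K \Tor_i\bigl(S/A, S/B\bigr)_j
\]
for all $i, j$, where $A := \Gin_\lex(I)$ is Borel-fixed and $B := \Gin_\rlex(J)$ is opposite Borel-fixed. Therefore the proposition reduces to the claim: \emph{for every Borel-fixed $A$ and opposite Borel-fixed $B$, the module $\Tor_i(S/A, S/B)$ has finite length for all $i > 0$.}

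I would prove this claim by induction on the complexity
\[
\ell(A, B) := \dim_K S/(A + \mathfrak{m}^N) + \dim_K S/(B + \mathfrak{m}^N),
\]
where $N$ is chosen once and for all to exceed the maximum degree of a minimal generator of $A$ or $B$ (a valid choice because neither $I \mapsto I:x_j$ nor $I \mapsto I + (x_j)$ increases the maximum generator degree). Suppose $A \neq \mathfrak{m}$, set $j = \min\{i : x_i \notin A\}$, and assume $A$ has a minimal generator $u$ divisible by $x_j$. Then, as recalled in the text, both $A:x_j$ and $A + (x_j)$ are Borel-fixed; they strictly contain $A$ (the former because minimality of $u$ forces $u/x_j \in (A:x_j) \setminus A$, the latter trivially), so their $\ell$-complexities are strictly smaller. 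Applying the functor $\Tor(-,S/B)$ to the short exact sequence
\[
0 \longrightarrow S/(A:x_j)(-1) \xrightarrow{\,\cdot x_j\,} S/A \longrightarrow S/(A+(x_j)) \longrightarrow 0
\]
yields a long exact sequence in which, for each $i > 0$, the module $\Tor_i(S/A, S/B)$ is an extension of a submodule of $\Tor_i(S/(A+(x_j)), S/B)$ by a quotient of $\Tor_i(S/(A:x_j), S/B)$, both of finite length by the inductive hypothesis; hence $\Tor_i(S/A, S/B)$ is of finite length. A symmetric argument on the other side reduces via $x_k := \max\{i : x_i \notin B\}$ when $B \neq \mathfrak{m}$ has a minimal generator divisible by $x_k$.

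The remaining (base) cases are those where neither reduction applies. When $A = \mathfrak{m}$ or $B = \mathfrak{m}$, the Tor is annihilated by $\mathfrak{m}$ and is automatically of finite length. Otherwise, by the structural observation recalled in the excerpt, $A$ and $B$ are both generated by linear forms, so $A = (x_1, \ldots, x_a)$ and $B = (x_b, \ldots, x_n)$: if $a \geq b - 1$ then $A + B = \mathfrak{m}$ and once more $\Tor_i$ is annihilated by $\mathfrak{m}$, whereas if $a < b-1$ the variables of $A$ and $B$ are disjoint, so $x_1, \ldots, x_a$ form a regular sequence on $S/B$ and the Koszul resolution of $S/A$ shows $\Tor_i(S/A, S/B) = 0$ for $i > 0$. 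The main bookkeeping obstacle is ensuring that $\ell$ strictly decreases at every inductive step; this rests on the minimality argument above for $A \subsetneq A:x_j$, together with the stability of the generator-degree bound under the two operations (so that a single cut-off $N$ works throughout the induction).
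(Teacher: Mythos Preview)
Your proposal is correct and follows essentially the same route as the paper: reduce via Theorem~\ref{2.5} to the Borel-fixed/opposite Borel-fixed case, then run a Noetherian induction using the short exact sequence $0 \to S/(A:x_j) \to S/A \to S/(A+(x_j)) \to 0$, with the linearly generated case as the base. The only differences are cosmetic---the paper phrases the induction as a ``maximal counterexample'' argument and invokes the involution $op$ to avoid writing out the symmetric reduction on $B$, whereas you introduce an explicit length-type complexity measure and spell out the base case in more detail.
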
 
\begin{proof} When $I$ or $J$ are equal to $(0)$ or to $S$ the result is obvious. Recall that a finitely generated graded module $M$ has Krull dimension zero if and only if $M_d=0$ for all $d$ sufficiently large. In virtue of Theorem \ref{2.5} it is enough to show that $\Tor_i(S/I,S/J)$ has Krull dimension zero whenever $I$ is Borel-fixed, $J$ opposite Borel-fixed and $i>0.$  By contradiction, let the pair $I,J$ be a maximal counterexample (with respect to point-wise inclusion). By the above discussion, and by applying $op$ if necessary, we can assume that 
$I$ has a minimal generator of degree greater than 1. Let $j=\min \{h : x_h\not \in I \}$ and notice that both 
$(I:x_j)$ and $(I+(x_j))$ strictly contain $I.$ For every $i>0$ the short exact sequence $ 0 \rightarrow S/(I:x_j) \rightarrow S/I \rightarrow S/(I+(x_j)) \rightarrow 0$  induces the exact sequence 
\[
\Tor_i(S/(I:x_j),S/J)
\rightarrow 
\Tor_i(S/I,S/J)
 \rightarrow 
\Tor_i(S/(I+(x_j)),S/J).
\]
By the maximality of $I,J$, the first and the last term have Krull dimension zero. Hence the middle term must have dimension zero as well, contradicting our assumption.
\end{proof}

\section{General intersections and general products}

In this section, we prove Theorems \ref{intersection} and \ref{product}. We will assume throughout the rest of the paper $\mathrm{char}(K)=0.$

A monomial ideal $I \subset S$ is said to be \textit{$0$-Borel} (or \textit{strongly stable})
if, for every monomial $u x_j \in I$ and for every $1 \leq i <j$ one has $ux_i \in I$.
Note that $0$-Borel ideals are precisely all the possible Borel-fixed ideals in characteristic $0$.  
In general, the Borel-fixed property depends on the characteristic of the field  and we refer the readers to \cite[\S 15.9]{Ei} for the details. 
A set $W \subset S$ of monomials in $S$ is said to be \textit{$0$-Borel} if the ideal they generate is $0$-Borel, or equivalently if for every monomial $u x_j \in W$ and for every $1 \leq i <j$ one has $ux_i \in W$. Similarly we say that a monomial ideal $J \subset S$ is \textit{opposite $0$-Borel} if for every monomial $ux_j \in J$ and for every $j < i \leq n$ one has $ux_i \in J$.

Let $>_\rev$ be the reverse lexicographic order induced by the ordering $x_1 > \cdots >x_n$.
We recall the following  result \cite[Lemma 3.2]{Mu}.

\begin{lemma}
\label{3-1}
Let $V=\{v_1,\dots,v_s\} \subset S_d$ be a $0$-Borel set of monomials
and $W =\{w_1,\dots,w_s\} \subset S_d$ the lex set of monomials,
where $v_1 \geq_{\rev} \cdots \geq_{\rev} v_s$ and
$w_1 \geq_{\rev} \cdots \geq _{\rev} w_s$.
Then $v_i \geq_{\rev} w_i$ for all $i=1,2,\dots,s$.
\end{lemma}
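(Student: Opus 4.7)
The plan is to proceed by induction on the number of variables $n$. The base case $n \leq 2$ is immediate: in at most two variables every $0$-Borel set of a given cardinality coincides with the lex set, so $V = W$ and the two sorted lists are equal.

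For the inductive step, assume $n \geq 3$ and that the lemma holds in $n-1$ variables. I would decompose both $V$ and $W$ according to $x_n$-exponent:
\[
V^{(k)} = \{m \in K[x_1,\dots,x_{n-1}]_{d-k} : m x_n^k \in V\}, \qquad W^{(k)} = \{m \in K[x_1,\dots,x_{n-1}]_{d-k} : m x_n^k \in W\}.
\]
A direct verification shows each $V^{(k)}$ is a $0$-Borel subset and each $W^{(k)}$ is a lex subset of $K[x_1,\dots,x_{n-1}]_{d-k}$. Since monomials of smaller $x_n$-exponent are $>_\rev$-larger, the $>_\rev$-decreasing listing of $V$ is exactly the concatenation of the $>_\rev$-sorted slices $V^{(0)}, V^{(1)}, \dots$, and similarly for $W$.

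The central step is the \emph{partial-sum dominance}
\[
\sum_{k \leq K} |V^{(k)}| \;\geq\; \sum_{k \leq K} |W^{(k)}| \qquad \text{for every } K \geq 0,
\]
which records that the lex set pushes as many elements as possible into high $x_n$-exponent slices. Granted this, if $v_i$ lies in slice $k_i$ and $w_i$ in slice $k'_i$, one deduces $k_i \leq k'_i$ (otherwise the partial sum inequality fails at $K = k_i - 1$). When $k_i < k'_i$ the element $v_i$ has strictly smaller $x_n$-exponent, so $v_i >_\rev w_i$ automatically. When $k_i = k'_i = k$, writing $p = i - \sum_{k' < k} |V^{(k')}|$ and $q = i - \sum_{k' < k} |W^{(k')}|$ so that $p \leq q$, I would apply the inductive hypothesis to $V^{(k)}$ and to the lex set $L$ of cardinality $|V^{(k)}|$ in $n-1$ variables to get that the $p$-th $>_\rev$-element of $V^{(k)}$ dominates that of $L$. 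Since $L$ and $W^{(k)}$ are both lex, one is contained in the other, and the slack $q - p$ inherited from the partial-sum dominance absorbs exactly the shift of $>_\rev$-positions caused by that containment, yielding $v_i \geq_\rev w_i$.

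The main obstacle is establishing the partial-sum dominance. Equivalently, writing $M_K = \{m \in S_d : \deg_{x_n}(m) \leq K\}$, one needs $|V \cap M_K| \geq |W \cap M_K|$; this is a Macaulay-type extremality statement asserting that the lex set maximizes the count of monomials divisible by $x_n^{K+1}$ among $0$-Borel sets of the same cardinality in $S_d$. I would prove it by induction on $K$ (or on $d$), working with the colon sets $\{v/x_n^{K+1} : v \in V, \, x_n^{K+1} \mid v\}$, which are themselves $0$-Borel in $S_{d-K-1}$, and combining the inductive hypothesis in $n-1$ variables with the $0$-Borel expansion relations $V^{(k)} \supseteq V^{(k+1)} \cdot (x_1, \dots, x_{n-1})$ between consecutive slices. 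The other delicate point to handle carefully is the ``same slice with different sizes'' case described above, but this turns out to be a formal consequence of the partial-sum inequality once the containment between $L$ and $W^{(k)}$ is made explicit.
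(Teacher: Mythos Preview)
The paper does not actually prove this lemma: it is quoted verbatim from \cite[Lemma~3.2]{Mu}, so there is no in-paper argument to compare against. I will therefore only assess the correctness of your outline.

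Your induction on $n$ with the $x_n$-slicing is a sound strategy, and the ``same slice'' comparison does go through. When $k_i=k'_i=k$, write $a=|V^{(k)}|$, $b=|W^{(k)}|$, and let $L$ be the lex set of size $a$. Your partial-sum inequality at levels $k-1$ and $k$ gives both $p\le q$ and $b-a\le q-p$. If $a\ge b$ then $L\supseteq W^{(k)}$, and a counting of how many elements of $W^{(k)}$ sit revlex-above the $p$-th element of $L$ yields the comparison. If $a<b$ then $L\subset W^{(k)}$, and the $p$-th element of $L$ has at most $p+(b-a)\le q$ elements of $W^{(k)}$ revlex-above it, so again it dominates the $q$-th element of $W^{(k)}$. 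So the reduction to partial-sum dominance is fine.

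The genuine gap is the proof of the partial-sum dominance itself. Your sketch (``induction on $K$ or on $d$, colon sets, expansion relations'') does not isolate the one nontrivial input that makes it work: Macaulay's inequality that lex sets have the smallest upper shadow. Concretely, set $V'_K=\{v/x_n^{K}: v\in V,\ x_n^{K}\mid v\}\subset S_{d-K}$, a $0$-Borel set. Since $V$ is $0$-Borel, $V\supseteq V'_K\cdot S_K$, hence $s=|V|\ge |\partial^{K}(V'_K)|\ge |\partial^{K}(L_{|V'_K|})|$ by Macaulay, where $L_t$ is the lex set of size $t$ and $\partial^{K}$ is the $K$-fold shadow in $n$ variables. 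As iterated shadows of lex sets are lex and lex sets are nested, $W\supseteq \partial^{K}(L_{|V'_K|})$; taking colon by $x_n^{K}$ gives $W'_K\supseteq L_{|V'_K|}$, i.e.\ $|W^{(\ge K)}|\ge |V^{(\ge K)}|$, which is exactly your dominance. Without explicitly invoking Macaulay's bound (or reproving it), the inductive scheme you describe does not close, because the expansion relation $V^{(k-1)}\supseteq V^{(k)}\cdot(x_1,\dots,x_{n-1})$ holds for \emph{both} $V$ and $W$ and by itself gives no asymmetry between them.
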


Since generic initial ideals with respect to $>_\lex$ are $0$-Borel,
the next lemma and Corollary \ref{2.6}(i)
prove Theorem \ref{intersection}.

\begin{lemma}
\label{3-2}
Let $I \subset S$ be a $0$-Borel ideal and $P \subset S$ an opposite lex ideal.
Then $\dim_K(I\cap P)_d \leq \dim_K (I^\lex \cap P)_d$
for all $d\geq 0$.
\end{lemma}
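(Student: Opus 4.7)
The plan is to deduce the lemma from Lemma \ref{3-1} once one recognizes that opposite lex sets in a fixed degree are precisely final segments of $S_d$ in the reverse lex order.

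First, I will verify the order-theoretic identification that, on monomials of a fixed degree, $>_{\rlex}$ and $>_{\rev}$ are reverses of one another: for $u, v \in S_d$, one has $u >_{\rlex} v$ if and only if $v >_{\rev} u$. Unfolding the definitions, both inequalities are equivalent to the statement that, at the largest index $i$ at which the exponent vectors of $u$ and $v$ differ, one has $u_i > v_i$. In particular, the hypothesis that $P$ is opposite lex amounts to saying that every graded piece $P_d$ is downward closed in $(S_d, >_{\rev})$, i.e., $P_d$ coincides with the set of the $\dim_K P_d$ smallest elements of $S_d$ in the reverse lex order.

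Next, fix $d$, set $s = \dim_K I_d = \dim_K (I^\lex)_d$ and list $I_d = \{v_1, \dots, v_s\}$ and $(I^\lex)_d = \{w_1, \dots, w_s\}$ in decreasing $>_{\rev}$ order. Since $I$ is $0$-Borel, $I_d$ is a $0$-Borel set of monomials, while $(I^\lex)_d$ is the lex set of the same cardinality, so Lemma \ref{3-1} applies and yields $v_i \geq_{\rev} w_i$ for every $i$. The conclusion is then a comparison of memberships in $P_d$ index by index: whenever $v_i \in P_d$ (equivalently, $v_i$ belongs to the final segment identified in the previous step), the inequality $w_i \leq_{\rev} v_i$ forces $w_i \in P_d$ as well. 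This produces the inclusion $\{i : v_i \in P_d\} \subseteq \{i : w_i \in P_d\}$, and taking cardinalities gives
\[
\dim_K (I \cap P)_d = |I_d \cap P_d| \leq |(I^\lex)_d \cap P_d| = \dim_K (I^\lex \cap P)_d.
\]
No genuine obstacle appears; Lemma \ref{3-1} carries all of the combinatorial weight, and the only ingredient not directly quoted from the paper is the elementary identification of $>_{\rlex}$ with the reverse of $>_{\rev}$ on each $S_d$.
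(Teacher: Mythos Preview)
Your argument is correct and is essentially the paper's own proof: both observe that $P_d$ is the final segment of $(S_d,>_{\rev})$, invoke Lemma~\ref{3-1} to get $v_i \geq_{\rev} w_i$, and then deduce the inequality by an index-by-index comparison (the paper phrases this last step via the threshold $m=\max_{>_{\rev}} Q$, which is equivalent). Your added verification that $>_{\rlex}$ reverses $>_{\rev}$ on each $S_d$ just makes explicit what the paper's observation about $Q$ uses.
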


\begin{proof}
Fix a degree $d$.
Let $V,W$ and $Q$ be the sets of monomials of degree $d$ in $I$, $I^\lex$ and $P$ respectively.
It is enough to prove that $\# V \cap Q \leq \# W \cap Q$.

Observe that $Q$ is the set of the smallest $\#Q$ monomials in $S_d$ with respect to $>_\rev$.
Let $m=\max_{>_\rev} Q$.
Then by Lemma \ref{3-1}
$$\# V \cap Q = \# \{ v \in V: v \leq_{\rev} m\} 
\leq \# \{ w \in W: w \leq_{\rev} m\} = \# W \cap Q,$$
as desired.
\end{proof}

Next, we consider products of ideals.
For a monomial $u \in S$, let $\max u$ (respectively, $\min u$) be the maximal (respectively,
minimal) integer $i$ such that $x_i$ divides $u$, where we set $\max 1 = 1$ and
$\min 1 = n$. For a monomial ideal $I \subset S$, let $I_{(\leq k)}$ be the K-vector space spanned by
all monomials $u \in I$ with $\max u \leq k$.

\begin{lemma}
\label{3-4}
Let $I \subset S$ be a $0$-Borel ideal and $P \subset S$ an opposite $0$-Borel ideal.
Let $G(P)=\{u_1,\dots,u_s\}$ be the set of the minimal monomial generators of $P$.
As a $K$-vector space,
$IP$ is the direct sum
$$
IP=\bigoplus_{i=1}^s (I_{(\leq \min u_i)})u_i.
$$
\end{lemma}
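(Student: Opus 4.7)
The plan is to show that every monomial in $IP$ has a unique expression as $v \cdot u_i$ with $v \in I$ and $\max v \leq \min u_i$; uniqueness yields the direct sum, and existence, combined with the trivial inclusion $I_{(\leq \min u_i)} u_i \subseteq IP$, gives the identity of vector spaces.

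First, for each monomial $m = x_1^{a_1} \cdots x_n^{a_n}$ in $P$, I establish existence and uniqueness of a minimal generator $u_i \in G(P)$ dividing $m$ with $\max(m/u_i) \leq \min u_i$ (the \emph{tail-form generator} of $m$). For existence, let $k$ be the smallest index for which $x_k^{a_k} x_{k+1}^{a_{k+1}} \cdots x_n^{a_n}$ lies in $P$, and then let $b$ be the smallest positive integer with $u_i := x_k^{b} x_{k+1}^{a_{k+1}} \cdots x_n^{a_n}$ in $P$; by construction $u_i/x_k \notin P$, so $u_i \in G(P)$, and $m/u_i \in K[x_1,\dots,x_k]$. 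For uniqueness, if $u$ and $u'$ are two such tail-form generators with $\min u \leq \min u'$, writing them explicitly using the tail of $m$ reveals that $u'$ divides $u$ componentwise; so $u=u'$, since otherwise $u$ would be a proper multiple of the $P$-element $u'$, contradicting its minimality. This uniqueness immediately shows the sum on the right side of the claimed identity is direct.

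It remains to prove that for every monomial $m \in IP$ with tail-form $u_i$, the quotient $w := m/u_i$ lies in $I$. Choose any decomposition $m = fg$ with $f \in I$ and $g \in P$ both monomials, and apply \emph{joint Borel moves}: for indices $l < l'$ with $x_l \mid g$ and $x_{l'} \mid f$, replace $(f,g)$ by $(f x_l/x_{l'},\, g x_{l'}/x_l)$. The $0$-Borel property of $I$ keeps the new $f$ in $I$; the opposite $0$-Borel property of $P$ keeps the new $g$ in $P$; the product $fg=m$ is preserved. I apply such moves in two successive phases: first reduce $\max f$ until $\max f \leq k_i := \min u_i$, then reduce the exponent of $x_{k_i}$ in $f$ until it is at most $a_{k_i}-b$, where $b$ is the exponent of $x_{k_i}$ in $u_i$. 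Once both conditions hold, $u_i \mid g$; writing $g = u_i g'$ gives $w = f g' \in I$ since $I$ is an ideal.

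The main obstacle is verifying that an admissible index $l$ always exists at each stage, so the algorithm never gets stuck before the conditions are met. Suppose at some step no valid $l$ is available; then every variable appearing in $g$ has index at least that of the variable in $f$ we wish to reduce. Let $u_{j_0}$ be the tail-form generator of the current $g$ inside $P$ (which exists by the first step). One checks from the stuck configuration that $u_{j_0}$ also satisfies the tail-form condition for $m$ itself, so by the uniqueness from the second paragraph $u_{j_0} = u_i$; but a direct comparison of indices and exponents forces either $\min u_{j_0} > k_i$ or produces a proper $P$-divisor of $u_i$, contradicting $\min u_i = k_i$ or the minimality of $u_i$ in $G(P)$ respectively. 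Hence the algorithm always makes progress, producing $u_i \mid g$ and thus $w \in I$, which completes the proof.
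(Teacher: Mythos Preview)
Your proof is correct and follows essentially the same approach as the paper: both arguments use Borel moves (swapping $x_l/x_{l'}$ between the $I$-factor and the $P$-factor) to produce a factorization $m=fg$ with $\max f\le \min g$ and $g\in G(P)$, and both establish uniqueness by observing that two candidate generators must divide one another. The paper's version is organized more compactly---it runs the Borel moves directly until $\max f\le\min g$, then observes that (a) and (b) follow, and proves uniqueness in two lines via divisibility---whereas you first isolate the ``tail-form generator'' of a monomial in $P$, prove its uniqueness, and then run a two-phase algorithm with a separate stuck-case analysis to show the quotient lands in $I$. Your extra scaffolding is not wrong, but it is not needed: once $\max f\le\min g$, passing to any minimal generator $u$ of $P$ dividing $g$ with $\max(g/u)\le\min u$ (whose existence your first paragraph already gives) immediately yields $m=(f\cdot g/u)\cdot u$ with $f\cdot g/u\in I$ and $\max(f\cdot g/u)\le\min u$, so the second phase and the contradiction argument can be absorbed into a single step.
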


\begin{proof}
It is enough to prove that,
for any monomial $w \in IP$, there is the unique expression $w=f(w)g(w)$
with $f(w) \in I$ and $g(w) \in P$
satisfying
\begin{itemize}
\item[(a)] $\max f(w) \leq \min g(w)$.
\item[(b)] $g(w) \in G(P)$.
\end{itemize}
Given any expression $w=fg$ such that $f \in I$ and $g \in P$,
since $I$ is $0$-Borel and $P$ is opposite $0$-Borel,
if $\max f > \min g$ then we may replace $f$ by $f \frac{x_{\min g}} {x_{\max f}} \in I$
and replace $g$ by $g \frac{x_{\max f}} {x_{\min g}} \in P$.
This fact shows that there is an expression satisfying (a) and (b).

Suppose that the expressions $w=f(w)g(w)$ and $w=f'(w)g'(w)$ satisfy conditions (a) and (b).
Then, by (a), $g(w)$ divides $g'(w)$ or $g'(w)$ divides $g(w)$.
Since $g(w)$ and $g'(w)$ are generators of $P$,
$g(w)=g'(w)$.
Hence the expression is unique.
\end{proof}

\begin{lemma}
\label{3-5}
Let $I \subset S$ be a $0$-Borel ideal and $P \subset S$ an opposite $0$-Borel ideal.
Then $\dim_K(IP)_d \geq \dim_K (I^{\lex}P)_d$
for all $d\geq 0$.
\end{lemma}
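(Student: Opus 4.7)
The plan is to use Lemma~\ref{3-4} to turn the inequality into a comparison of graded pieces of the vector spaces $I_{(\leq k)}$ and $(I^{\lex})_{(\leq k)}$, and then to derive that comparison from Lemma~\ref{3-1}. Let $G(P) = \{u_1, \ldots, u_s\}$ be the minimal generators of $P$. Since $I^{\lex}$ is a lex ideal, it is in particular $0$-Borel, so Lemma~\ref{3-4} applies to both pairs $(I,P)$ and $(I^{\lex}, P)$ and yields
\[
\dim_K (IP)_d = \sum_{i=1}^{s} \dim_K \bigl(I_{(\leq \min u_i)}\bigr)_{d - \deg u_i},
\]
together with the analogous identity for $I^{\lex} P$. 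It therefore suffices to show that, for every $k$ and every $e$,
\[
\dim_K \bigl(I_{(\leq k)}\bigr)_e \;\geq\; \dim_K \bigl((I^{\lex})_{(\leq k)}\bigr)_e.
\]

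The key observation I will use is that the set $T_{k,e} = \{u \in S_e : \max u \leq k\}$ is a $>_{\rev}$-initial segment of $S_e$, i.e., it consists of the top $\binom{k+e-1}{e}$ monomials of $S_e$ with respect to $>_{\rev}$. This follows directly from the definition of reverse lexicographic order: any degree $e$ monomial divisible by some $x_j$ with $j > k$ is strictly smaller in $>_{\rev}$ than every degree $e$ monomial supported on $\{x_1, \ldots, x_k\}$. In particular, $T_{k,e}$ is upward closed with respect to $>_{\rev}$.

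To finish, let $V = \{v_1, \ldots, v_t\}$ and $W = \{w_1, \ldots, w_t\}$ be the degree $e$ monomials lying in $I$ and in $I^{\lex}$, respectively, each listed in decreasing order with respect to $>_{\rev}$. Lemma~\ref{3-1}, applied to the $0$-Borel set $V$ and the lex set $W$ (which have the same cardinality since $I$ and $I^{\lex}$ have the same Hilbert function), gives $v_i \geq_{\rev} w_i$ for all $i$. Because $T_{k,e}$ is upward closed in $>_{\rev}$, whenever $w_i \in T_{k,e}$ we also have $v_i \in T_{k,e}$, so the number of $i$ with $v_i \in T_{k,e}$ is at least the number with $w_i \in T_{k,e}$. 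This is exactly the inequality $\dim_K (I_{(\leq k)})_e \geq \dim_K ((I^{\lex})_{(\leq k)})_e$, completing the proof. The only point requiring any thought is the identification of $T_{k,e}$ with a $>_{\rev}$-initial segment, but since this is immediate from the definition of $>_{\rev}$, I do not expect a genuine obstacle.
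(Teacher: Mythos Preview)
Your proof is correct and follows the same approach as the paper: reduce via the direct sum decomposition of Lemma~\ref{3-4} to the inequality $\dim_K (I_{(\leq k)})_e \geq \dim_K ((I^{\lex})_{(\leq k)})_e$, and deduce the latter from Lemma~\ref{3-1}. The paper states this reduction in two sentences without spelling out the $>_{\rev}$-initial-segment argument, so your version simply makes explicit the details the paper leaves to the reader.
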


\begin{proof}
Lemma \ref{3-1} shows that $\dim_K {I_{(\leq k)}}_d \geq \dim_K {I^\lex_{(\leq k)}}_d$
for all $k$ and $d \geq 0$.
Then the statement follows from Lemma \ref{3-4}.
\end{proof}

Finally we prove Theorem \ref{product}.

\begin{proof}[Proof of Theorem \ref{product}]
Let $I'=\Gin_\lex(I)$ and $J'=\Gin_\oplex(J)$.
Since $I'$ is $0$-Borel and $J'$ is opposite $0$-Borel,
by Corollary \ref{2.6}(ii) and Lemmas \ref{3-5}
$$H(Ig(J),d) \geq H(I'J',d) \geq H(I^{\lex} J',d) \geq H(I^\lex J^\rlex,d)$$
for all $d \geq 0$.
\end{proof}

\begin{remark}
\label{rem1}
Theorems \ref{intersection} and \ref{product}
are sharp.
Let $I \subset S$ be a Borel-fixed ideal and $J \subset S$ an ideal satisfying that $h(J)=J$
for any lower triangular matrix $h \in \GL_n(K)$.
For a general $g \in \GL_n(K)$,
we have the LU decomposition $g=bh$
where $h \in \GL_n(K)$ is a lower triangular matrix and $b \in \GL_n(K)$ is an upper triangular matrix.
Then as $K$-vector spaces
$$I \cap g(J) \cong b^{-1}(I) \cap h(J)= I\cap J
\mbox{ and }
I g(J) \cong b^{-1}(I) h(J)= I J.$$
Thus if $I$ is lex and $J$ is opposite lex then
$H(I\cap g(J),d)=H(I\cap J,d)$ and
$H(Ig(J),d)=H(I J,d)$ for all $d\geq 0$.
\end{remark}

\begin{remark}\label{example}
The assumption on $\Gin_\lex(J)$ in Theorem \ref{intersection} is necessary.
Let $I=(x_1^3,x_1^2x_2,x_1x_2^2,x_2^3) \subset K[x_1,x_2,x_3]$
and $J=(x_3^2,x_3^2x_2,x_3x_2^2,x_2^3)\subset K[x_1,x_2,x_3]$.
Then the set of monomials of degree $3$ in $I^\lex$ is
$\{x_1^3,x_1^2x_2,x_1^2x_3,x_1x_2^2\}$
and that of $J^\rlex$ is
$\{x_3^3,x_3^2x_2,x_3^2x_1,x_3x_2^2\}$.
Hence $H(I^\lex\cap J^\rlex,3)=0$.
On the other hand, as we see in Remark \ref{rem1},
$H(I\cap g(J),3)=H(I\cap J,3)=1$. Similarly, the  assumption on the characteristic of $K$ is  needed as one can easily see by considering  $\mathrm{char}(K)=p>0$, $I=(x_1^p,x_2^p)\subset K[x_1,x_2]$ and $J=x_2^p.$ In this case we have 
$H(I^\lex\cap J^\rlex,p)=0$, while $H(I\cap g(J),p)=H(g^{-1}(I)\cap J,p)=1$ since $I$ is fixed under any change of coordinates.
\end{remark}

Since $\Tor_0(S/I,S/J) \cong S/(I+J)$
and $\Tor_1(S/I,S/J) \cong (I\cap J)/ IJ$
for all homogeneous ideals $I \subset S$ and $J \subset S$,
Theorems \ref{intersection} and \ref{product} show the next statement.

\begin{remark}
\label{cor}
Conjecture \ref{conj} is true if $i=0$ or $i=1.$
\end{remark}

\end{document}